\newtheorem{Thm}{Theorem}[section]
\newtheorem{Cor}[Thm]{Corollary}
\newtheorem{Lem}[Thm]{Lemma}
\newtheorem{Prop}[Thm]{Proposition}
\newtheorem{step}{Step}[section]
\theoremstyle{definition}
\newtheorem{Rem}[Thm]{Remark}
\newcommand{\bbA}{\mathbb{A}}
\newcommand{\bbM}{\mathbb{M}}
\newcommand{\bbN}{\mathbb{N}}
\newcommand{\cI}{\mathcal{I}}
\newcommand{\cJ}{\mathcal{J}}
\newcommand{\cO}{\mathcal{O}}
\newcommand{\cP}{\mathcal{P}}
\newcommand{\cS}{\mathcal{S}}
\newcommand{\cU}{\mathcal{U}}
\newcommand{\cV}{\mathcal{V}}
\newcommand{\bd}{\mathbf{d}}
\newcommand{\bp}{\mathbf{p}}
\newcommand{\bq}{\mathbf{q}}
\newcommand{\vep}{\varepsilon}
\newcommand{\df}{\colon}
\let\mod=\undefined
\DeclareMathOperator{\GL}{GL}
\DeclareMathOperator{\End}{End}
\DeclareMathOperator{\Hom}{Hom}
\DeclareMathOperator{\mod}{mod}
\DeclareMathOperator{\rad}{rad}
\DeclareMathOperator{\rep}{rep}
\DeclareMathOperator{\supp}{supp}
\DeclareMathOperator{\gldim}{gl.dim}
\DeclareMathOperator{\bdim}{\mathbf{dim}}
\newcommand{\ol}{\overline}
\begin{document}


\title[Algebras with irreducible module varieties II]{Algebras with irreducible module varieties II: Two vertex case}

\author{Grzegorz Bobi\'nski}
\address{Grzegorz Bobi\'nski\newline
Faculty of Mathematics and Computer Science\newline
Nicolaus Copernicus University\newline
ul. Chopina 12/18\newline
87-100 Toru\'n\newline
Poland}
\email{gregbob@mat.umk.pl}

\author{Jan Schr\"oer}
\address{Jan Schr\"oer\newline
Mathematisches Institut\newline
Universit\"at Bonn\newline
Endenicher Allee 60\newline
53115 Bonn\newline
Germany}
\email{schroer@math.uni-bonn.de}

\begin{abstract}
We call a finite-dimensional $K$-algebra $A$ \emph{geometrically irreducible} if for all $d \ge 0$ all connected components of the affine scheme of $d$-dimensional $A$-modules are irreducible. We prove that a geometrically irreducible algebra with exactly two simple modules has to be of a very special form, which we describe. Based on this result we prove that every minimal geometrically irreducible algebra without shortcuts in its Gabriel quiver has at most two simple modules.
\end{abstract}

\subjclass[2010]{Primary 16G20; Secondary 14R99, 14M99}

\keywords{module variety, scheme of representations, geometrically irreducible algebra, minimal geometrically irreducible algebra}

\maketitle

\setcounter{tocdepth}{1}
\numberwithin{equation}{section}
\tableofcontents


\section{Introduction and main result}


\subsection{Introduction}
Throughout the paper $K$ is an algebraically closed field. By an algebra we mean a finite-dimensional algebra over $K$ and by a module a finite-dimensional left module.

Given an algebra $A$ and a nonnegative integer $d$, one defines the variety $\mod (A, d)$, called the variety of $d$-dimensional $A$-modules, which consists of the $K$-algebra homomorphisms $A \to \bbM_d(K)$ and parameterizes $A$-modules of dimension $d$. Different aspects of its geometry have been objects of intensive studies during the last decades (see~\cites{BobinskiRiedtmannSkowronski, Bongartz1998, Geiss, Zwara} for some references and reviews of results). In particular, one may ask, for a given algebra $A$, which properties the varieties $\mod (A, d)$ have for all dimensions $d$. For example, Bongartz has proved in~\cite{Bongartz1991} that the varieties $\mod (A, d)$ are smooth for all $d$ if and only if $A$ is hereditary (i.e., $\gldim A \leq 1$).

In~\cite{BobinskiSchroer2017a} we initiated the study of geometrically irreducible algebras, i.e., the algebras such that for each $d$ the connected components of $\mod (A, d)$ are irreducible. In particular, we proved that the only geometrically irreducible algebras of finite global dimension are the hereditary ones. On the other hand, the truncated polynomial algebras are the only geometrically irreducible local algebras. Note that an algebra $A$ is Morita equivalent to a local algebra if and only if there is exactly one (up to isomorphism) simple $A$-module. In this paper we concentrate on algebras with exactly two simple modules.

\subsection{Main result}
In order to formulate the main result of the paper we need to introduce some families of algebras. For $h \geq 0$, let $Q (h)$ be the quiver
\[
\xymatrix{
0 \ar@(ul,dl)_{\vep_0} & 1 \ar@(ur,dr)^{\vep_1} \ar@<-1ex>[l]_{\alpha_1} \ar@<1ex>^{\alpha_h}_{\cdots}[l]
}.
\]
Note that $Q (h)$ is connected if and only if $h > 0$. For $n \geq 1$ (and $h \geq 1$), let
\[
\rho^{(n)} := \sum_{i = 0}^n \vep_0^{n - i} \alpha_1 \vep_1^i.
\]
For $m_0, m_1 \geq 2$ and $h, n \geq 1$, let $A (h, m_0, m_1, n)$ be the path algebra of the quiver $Q (h)$ bounded by
\[
\vep_0^{m_0}, \; \vep_1^{m_1}, \; \rho^{(n)}.
\]
Similarly, for $m_0, m_1 \geq 1$ and $h \geq 0$, let $A' (h, m_0, m_1)$ be the path algebra of the quiver $Q (h)$ bounded by
\[
\vep_0^{m_0}, \; \vep_1^{m_1}.
\]

The following theorem is the first main result of the paper.

\begin{Thm}\label{thm:intro3}
Assume that $A$ is an algebra which has exactly two simples. If $A$ is geometrically irreducible, then $A$ is Morita equivalent to one of the following algebras:
\begin{enumerate}

\item
$A (h, m, m, 1)$, for some $m \geq 2$ and $h \geq 1$, or

\item
$A (h, m, m, m - 1)$, for some $m \geq 2$ and $h \geq 1$, or

\item
$A' (h, m_0, m_1)$, for some $m_0, m_1 \geq 1$ and $h \geq 0$.

\end{enumerate}
\end{Thm}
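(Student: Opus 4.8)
The plan is to argue by elimination: assuming $A$ is geometrically irreducible with two simples, I would progressively pin down its Gabriel quiver and then its relations, exhibiting a reducible connected component of some $\mod(A,\bd)$ whenever the structure departs from the three listed families. First, since geometric irreducibility is a Morita invariant, I may assume $A = KQ/I$ is basic with vertices $0,1$. The crucial elementary reduction is that the dimension vector is locally constant on $\mod(A,d)$ (the traces of the idempotents $e_0,e_1$ are continuous and integer-valued), so $\mod(A,d) = \bigsqcup_{|\bd|=d}\mod(A,\bd)$ is a decomposition into open-and-closed subschemes. Hence $A$ is geometrically irreducible if and only if every $\mod(A,\bd)$ has all of its connected components irreducible. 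Applying this to dimension vectors $\bd$ supported at a single vertex $j$, the modules with such $\bd$ are exactly the $A/\langle e_{1-j}\rangle$-modules, so the local algebra $A/\langle e_{1-j}\rangle$ is again geometrically irreducible; by the local classification from~\cite{BobinskiSchroer2017a} it must be truncated polynomial. This already forces at most one loop $\vep_j$ at each vertex, the relation $\vep_j^{m_j}=0$ (no loop when $m_j=1$), and the absence of any further relation supported at a single vertex.

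Next I would determine $Q$. Having fixed the loops, it remains to control arrows between the two vertices. If there were arrows in both directions, $Q$ would contain an oriented cycle through $0$ and $1$, and I would produce a dimension vector $\bd$ for which $\mod(A,\bd)$ has a reducible connected component. The mechanism throughout is the standard bound that any irreducible component of $\mod(A,\bd)$ through a module $M$ has dimension between $\dim\cO_M$ and $\dim\cO_M+\dim\Ext^1_A(M,M)$; a reducibility witness then consists of two module structures of the same dimension vector lying in one connected component but with incompatible such bounds, so that neither lies in the closure of the other. This rules out the two-directional case, so all arrows point the same way, say $\alpha_1,\dots,\alpha_h\colon 1\to 0$, and $Q=Q(h)$ (with $h=0$ the disconnected case). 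In particular $A$ is the triangular matrix algebra
\[
A \cong \begin{pmatrix} R_0 & M \\ 0 & R_1 \end{pmatrix},
\qquad R_j = K[\vep_j]/(\vep_j^{m_j}),
\]
where $M=e_0Ae_1$ is an $(R_0,R_1)$-bimodule, equivalently a module over $S := K[x,y]/(x^{m_0},y^{m_1})$ presented as a quotient $S^h \twoheadrightarrow M$ with kernel $V$ (here $x,y$ encode left multiplication by $\vep_0$ and right multiplication by $\vep_1$).

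The heart of the proof, and the main obstacle, is the classification of the admissible couplings $V$. When $V=0$ one gets $A=A'(h,m_0,m_1)$, case~(3). For $V\ne 0$ the goal is to show $m_0=m_1=:m$ and $V=S\cdot\rho^{(n)}$ for a single $n\in\{1,m-1\}$, the relation involving only $\alpha_1$. On a module the relation $\rho^{(n)}$ reads $\sum_{i=0}^{n} N_0^{\,n-i}X N_1^{\,i}=0$, where $N_0,N_1$ are the nilpotent operators $\vep_0,\vep_1$ and $X$ is the operator $\alpha_1$, while the remaining arrows $\alpha_2,\dots,\alpha_h$ contribute only affine-space factors and are harmless. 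I would analyze the solution varieties of this quasi-commutation equation stratified by the Jordan types of $N_0,N_1$, and demand that across all $\bd$ they assemble into a single irreducible component, comparing the orbit-dimension and $\Ext^1$ bounds above on each stratum. This compatibility is what forces $m_0=m_1$ and cuts the exponent down to $n\in\{1,m-1\}$, the two surviving values being interchanged by the self-duality $n\mapsto m-n$ of the Gorenstein ring $S$, which I would use to halve the casework; every other $V$ (wrong generator, several generators, $m_0\neq m_1$, or $n\notin\{1,m-1\}$) is eliminated by an explicit reducible $\mod(A,\bd)$. I expect the delicate part to be exactly the dimension counts for these solution spaces across Jordan strata, and I would organize them by induction on $m$ (or on $\bd$), isolating a small family of test dimension vectors that already detects every forbidden coupling.
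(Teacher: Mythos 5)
Your overall architecture matches the paper's: reduce to the quiver $Q(h)$ with at most one loop per vertex killed by $\vep_i^{m_i}$, write $A \cong A'(h,m_0,m_1)/\cJ$, and detect reducibility by stratifying $\rep(A,\bd)$ according to the Jordan types of the two nilpotent loops and comparing dimensions of strata. But the proposal stops exactly where the work begins. The entire content of the theorem in the case $\cJ\neq 0$ --- that $\cJ$ is generated by a \emph{single} relation, that $m_0=m_1$, and that the relation can be normalized to $\rho^{(1)}$ or $\rho^{(m_0-1)}$ --- is announced (``I would analyze\dots'', ``I expect the delicate part to be\dots'') but never argued. In the paper this occupies all of Section~\ref{sect proof one} and rests on a catalogue of explicit codimension formulas (Proposition~\ref{prop c formulas}) for the linear systems $\rho(J_\bp,J_\bq,M_1,\dots,M_h)=0$ on each stratum, fed into Corollary~\ref{cor irr}, together with an inductive normalization of the relation by arrow substitutions (Lemma~\ref{lem:1relation} and Steps~\ref{step1}--\ref{step4}). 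None of this appears in your plan even in outline, so the elimination of ``every other $V$'' is asserted rather than proved.

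Two more specific gaps. First, your reducibility mechanism is too weak as stated: exhibiting two module structures ``neither of which lies in the closure of the other'' does not contradict irreducibility, since incomparable orbits abound in irreducible varieties with group actions. What the paper actually uses is that the closure of the stratum $\cS_{\bp_0,\bq_0}$ attached to the \emph{maximal} pair of Jordan types is an irreducible component (Proposition~\ref{prop strat}~(2), imported from Part~I); reducibility then follows from producing a non-maximal stratum of at least the same dimension. You would need this component statement, or a carefully applied Voigt-type bound $\dim\cO_M\le\dim_{[M]}\rep(A,\bd)\le\dim\cO_M+\dim_K\Ext^1_A(M,M)$, to make your witnesses do anything. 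Second, you never invoke the fact that projective $A$-modules are locally free over the loop subalgebras (Proposition~\ref{prop known}~(3)), which is the input showing, via the Nakayama argument of Lemma~\ref{lem:a0prep}, that every generator of $\cJ$ must contain a nonzero term of the form $\vep_0^a\alpha_j$ and a nonzero term of the form $\alpha_{j'}\vep_1^b$ (Lemma~\ref{lem:a0}). Without this, the normal form $\rho=\vep_0^{n}\alpha_1+\omega\vep_1$ that starts the whole induction is unavailable, and your presentation $S^h\twoheadrightarrow M$ with kernel $V$ carries no a priori constraints from which the classification could proceed.
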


It is easy to see that if $m_0, m_1 \geq 1$ and $h \geq 0$, then $A' (h, m_0, m_1)$ is geometrically irreducible (see Proposition~\ref{prop Aprim}).

Now let $A = A (h, m, m, 1)$, for $m \geq 2$ and $h \geq 1$. The category $\mod(A)$ is related to the Birkhoff problem~\cite{Birkhoff} and has been intensively studied (see for example~\cites{KussinLenzingMeltzer, RingelSchmidmeier, Simson}). In~\cite{BobinskiSchroer2017b} we prove that $A (h, m, m, 1)$ is geometrically irreducible.

We do not know if $A (h, m, m, m - 1)$ is geometrically irreducible, if $m \geq 3$ and $h \geq 1$.

\subsection{Minimal geometrically irreducible algebras without shortcuts}
Our interest in two-vertex case and its significance follows from a conjecture formulated in~\cite{BobinskiSchroer2017a} concerning a general form of geometrically irreducible algebras. We call an algebra $A$ a \emph{gluing} of bound quiver algebras $K Q' / \cI'$ and $K Q'' / \cI''$ if $A$ is Morita equivalent to the algebra $K Q / \cI$, where
\begin{enumerate}

\item
$Q_0 = Q'_0 \cup Q''_0$ (the union is not necessarily disjoint);

\item
$Q_1 = Q'_1 \cup Q''_1$ and $Q'_1 \cap Q''_1 = \emptyset$;

\item
$\cI$ is the ideal in $K Q$ generated by the union $\cI' \cup \cI''$.

\end{enumerate}
A connected algebra, which cannot be presented as the gluing of two algebras as above with $Q_1' \neq \emptyset \neq Q_1''$, is called \emph{minimal}. We conjecture that every geometrically irreducible minimal algebra has at most two simple modules. The second main result of the paper says this conjecture holds for a wide class of algebras. More precisely, an arrow $\alpha$ in a quiver $Q$ is called a \emph{shortcut}, if $\alpha$ is not a loop and there is a path $\alpha_1 \cdots \alpha_l$ of length at least~2 connecting the same vertices as $\alpha$, such that none of the arrows $\alpha_i$ is a loop. Then we have the following.

\begin{Thm}\label{thm:conj}
Assume that $A = K Q / \cI$ is a minimal geometrically irreducible algebra. If there are no shortcuts in $Q$, then $Q$ has at most two vertices, hence $A$ is Morita equivalent to one of the following algebras:
\begin{enumerate}

\item
$K [X] / (X^m)$, for some $m \geq 1$, or

\item
$A (1, m, m, 1)$, for some $m \geq 2$, or

\item
$A (1, m, m, m - 1)$, for some $m \geq 2$.

\end{enumerate}
\end{Thm}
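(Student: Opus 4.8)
The plan is to separate three regimes---one vertex, at least three vertices, and exactly two vertices---and to let the two-vertex classification of Theorem~\ref{thm:intro3} carry most of the load. If $A$ is connected with a single vertex it is local, and the classification of geometrically irreducible local algebras gives $A \cong K[X]/(X^m)$, which is case~(1) (with $m=1$ the semisimple case). If $Q$ has exactly two vertices, Theorem~\ref{thm:intro3} already places $A$ among $A(h,m,m,1)$, $A(h,m,m,m-1)$, and $A'(h,m_0,m_1)$, and it remains to impose minimality. In $A(h,m,m,n)$ only $\alpha_1$ occurs in the defining relation $\rho^{(n)}$, so if $h \geq 2$ the arrows $\alpha_2, \dots, \alpha_h$ lie in no relation and split off, exhibiting a nontrivial gluing; hence $h=1$, giving cases~(2) and~(3). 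The families $A'(h,m_0,m_1)$ carry only loop relations, so a loop with its relation detaches from the connecting arrows and they fail to be minimal, apart from degenerate hereditary instances which I would dispose of directly.

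The core of the theorem is thus the reduction to at most two vertices. First I would pin down the local shape: restricting to a primitive idempotent $e_i$, the algebra $e_iAe_i$ is local and, via a restriction lemma transporting geometric irreducibility that I would need to supply, is a truncated polynomial algebra; hence each vertex carries at most one loop $\varepsilon_i$ bounded only by a power $\varepsilon_i^{m_i}$. For adjacent vertices $i,j$ I would restrict to the full subquiver on $\{i,j\}$, obtaining a two-vertex algebra $B$ to which the same restriction principle applies. The no-shortcut hypothesis is what keeps $B$ honest: since there is no directed loopless path of length $\geq 2$ between $i$ and $j$, the arrows of $B$ between $i$ and $j$ are exactly the original ones, so $B$ is Morita equivalent to a genuine two-vertex bound quiver algebra. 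Being geometrically irreducible, $B$ falls under Theorem~\ref{thm:intro3}, which forces the arrows between $i$ and $j$ to be parallel and co-oriented, with any relation among them of the localized commutator type $\rho^{(n)}$ tied to the loops at $i$ and $j$.

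With this rigid local picture I would attempt to glue. If every defining relation were supported on a single edge of the loopless quiver $\bar Q$ together with the loops at its endpoints, then with three or more connected vertices there are at least two edges, the arrows partition accordingly, the relations split, and we obtain a gluing with $Q_1' \neq \emptyset \neq Q_1''$, contradicting minimality. The one configuration obstructing this partition is a \emph{branch vertex} $v$ incident to two edges whose $\rho$-relations both involve the loop $\varepsilon_v$: then $\varepsilon_v$ cannot be assigned to a single side, the naive gluing fails, and such a vertex is permitted by the no-shortcut condition, since the two neighbours of $v$ need not be joined by any path paralleling an arrow.

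Ruling out this branch configuration is the main obstacle, and it is where geometric irreducibility must be used genuinely rather than formally. Here I would show that a vertex carrying two $\rho$-relations through a common loop produces a reducible connected component of some $\mod(A,d)$, typically by exhibiting two families of modules of equal dimension---one degenerating the loop action toward one neighbour, one toward the other---lying in the same component but sweeping out distinct irreducible pieces. Establishing this reducibility is the delicate step; once it is in place, branch vertices are excluded, every relation localizes to a single edge, the gluing argument goes through, and $Q$ must have at most two vertices.
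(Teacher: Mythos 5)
Your architecture is essentially the paper's: the reduction to Theorem~\ref{thm:intro3}, the identification of a ``branch vertex'' (two degree-one relations overlapping at a common middle vertex) as the one configuration that survives the gluing argument, and the plan to kill it by exhibiting two equal-dimensional families obtained by degenerating the action of the middle loop. This last step, which you correctly flag as the crux and leave open, is exactly the paper's Lemma~\ref{lem:nooverlap}: on the dimension vector $(1,n_2+1,1)$ one compares the locus where $M(\vep_1)$ is similar to $J_{n_2+1}$ with the locus where it is similar to $J_{(n_2,1)}$; both have dimension $n_2^2+n_2+hn_2+kn_2-n_1-2$, so the second cannot lie in the closure of the first, contradicting irreducibility. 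The paper then organizes the gluing contradiction slightly differently from you (it minimizes the number of degree-one generators and uses Lemma~\ref{lem:nooverlap} to merge two of them into one, and it treats an arrow with no parallel relation as a separate case yielding the $A_2$ path algebra), but these are cosmetic differences.

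The genuine gap is that you never control relations of degree at least two, i.e.\ relations supported on loopless paths through three or more vertices. The no-shortcut hypothesis forbids an \emph{arrow} parallel to such a path, but not the path itself, so such relations can exist a priori; they are invisible to every two-vertex restriction you take, they are not ``supported on a single edge of $\bar Q$,'' and they obstruct your arrow partition just as badly as a branch vertex does. So your claim that the branch vertex is the only obstruction is unjustified as stated. The paper needs a separate Lemma~\ref{lem:degree} asserting that $\cI$ is generated by relations of degree at most one, and its proof is not formal: one quotients to the support of each relation, lands on a ``linear'' quiver with loops, and invokes a result from Part~I of the series for such quivers. A smaller issue: for transporting geometric irreducibility to subquivers you should restrict via the quotient $A/AeA$ (whose representation schemes are unions of connected components of those of $A$), not via the corner algebra $eAe$, for which the transport is not obvious.
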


Recall we know that $K [X] / (X^m)$, for $m \geq 1$, and $A (1, m, m, 1)$, for $m \geq 2$, are geometrically irreducible, while it is open if $A (1, m, m, m - 1)$, for $m \geq 3$, is geometrically irreducible.

\subsection{Acknowledgments}
The first author acknowledges the support of the National Science Center grant no.\ 2015/17/B/ST1/01731. He would also like to thank the University of Bonn for its hospitality during his two one week visits in July 2016 and July 2017. The second author thanks the Nicolaus Copernicus University in Toru\'n for one week of hospitality in October 2015, where this work was initiated. He thanks for a second week in Toru\'n in March 2017, and he is also grateful to the SFB/Transregio TR 45 for financial support.


\section{Schemes of representations}


\subsection{Quivers with relations} \label{subsection quivers}
By a \emph{quiver} we mean a quadruple $Q = (Q_0, Q_1, s, t)$, where $Q_0$ and $Q_1$ are finite sets of vertices and arrows, respectively, and $s, t \df Q_1 \to Q_0$ are maps. We say that an arrow $\alpha$ starts in $s (\alpha)$ and ends in $t (\alpha)$.

A \emph{path} of length $l \geq 1$ in a quiver $Q$ is a sequence $\sigma = (\alpha_1, \ldots, \alpha_l)$ of arrows $\alpha_1, \ldots, \alpha_l \in Q_1$ such that $s (\alpha_i) = t (\alpha_{i + 1})$ for all $1 \leq i \leq l - 1$. We usually write $\alpha_1 \cdots \alpha_l$ instead of $(\alpha_1, \ldots, \alpha_l)$. Define $s (\sigma) := s (\alpha_l)$ and $t (\sigma) := t (\alpha_1)$. A path $\sigma$ of positive length is called an \emph{oriented cycle} if $s (\sigma) = t (\sigma)$. A \emph{loop} is an oriented cycle of length one, i.e., an arrow $\alpha$ such that $s (\alpha) = t (\alpha)$. Additionally to the paths of length $l \geq 1$, there is, for each vertex $i \in Q_0$, a path $e_i$ of length $0$ with $s (e_i) = t (e_i) = i$.

By $K Q$ we denote the path algebra of a quiver $Q$. It has all paths (including the length $0$ paths) as a $K$-basis, and the multiplication is defined via the composition of paths. Note that $K Q$ may be infinite-dimensional.

By a \emph{relation} in a quiver $Q$ we mean a linear combination of paths of length at least~2, which have the same starting vertex and the same ending vertex. An ideal $\cI$ in $K Q$ is called \emph{admissible} if the following hold:
\begin{itemize}

\item[(i)]
$\cI$ is generated by a set of relations;

\item[(ii)]
there exists some $l \geq 2$ such that all paths of length at least $l$ are contained in $\cI$.

\end{itemize}
A pair $(Q, \cI)$ consisting of a quiver $Q$ and an admissible ideal $\cI$ is called a \emph{bound quiver}. If $(Q, \cI)$ is a bound quiver, then we call $A := K Q / \cI$ a bound quiver algebra. One easily observes that $|Q_0|$ is the number of simple $A$-modules. If $i \in Q_0$, the we put $A_i := e_i A e_i$.

For each algebra $A$ there exists a bound quiver algebra $B$ such that $A$ and $B$ are Morita equivalent. It follows from Bongartz~\cite{Bongartz1991} that if algebras $A$ and $B$ are Morita equivalent, then $A$ is geometrically irreducible if and only if $B$ is geometrically irreducible. Consequently, from now on we assume that all considered algebras are bound quiver algebras.

\subsection{Schemes of representations}
Let $A = K Q / \cI$ be a bound quiver algebra. A \emph{representation} of $Q$ is a tuple $M = (M (i), M (\alpha))$, where  for each vertex $i \in Q_0$, $M (i)$ is a finite-dimensional $K$-vector space and, for each arrow $\alpha \in Q_1$, $M(\alpha) \df M (s (\alpha)) \to M (t (\alpha))$ is a $K$-linear map. The sequence $\bdim M = (\dim_K M (i))_{i \in Q_0}$ is called the \emph{dimension vector} of $M$. A \emph{morphism} $f \colon M \to N$ of representations is a sequence $f = (f (i))$, where for each vertex $i \in Q_0$, $f (i) \df M (i) \to N (i)$ is a $K$-linear map. For a path $\sigma = \alpha_1 \cdots \alpha_l$ in $Q$ of positive length and a representation $M$ we define
\[
M (\sigma) := M(\alpha_1) \cdots M(\alpha_l).
\]
Similarly, if $\rho = \sum_{i = 1}^k \lambda_i \sigma_i$ is linear combination of paths of positive lengths having the same starting vertex
and the same ending vertex,
we set
\[
M (\rho) := \sum_{i = 1}^k \lambda_i M (\sigma_i).
\]
A representation $M$ of $Q$ is a \emph{representation of $A$} (shortly an $A$-representa\-tion) if $M (\rho) = 0$ for all relations $\rho \in \cI$.

For a dimension vector $\bd = (d_i)_{i \in Q_0} \in \bbN^{Q_0}$ let
\[
\rep (Q, \bd) :=  \prod_{\alpha \in Q_1} \Hom_K (K^{d_{s (\alpha)}}, K^{d_{t (\alpha)}})
\]
be the affine space of representations $M$ of $Q$ with $M(i) = K^{d_i}$ for each $i \in Q_0$ and let
\begin{multline*}
\rep (A, \bd) := \{ M = (M(\alpha)) \in \rep(Q,\bd) \mid
\\
\text{$M (\rho) = 0$ for all relations $\rho \in \cI$} \}.
\end{multline*}
Then $\rep(A, \bd)$ is an affine scheme, called the \emph{scheme of $A${}-represen\-tations} of dimension vector $\bd$. Using Bongartz~\cite{Bongartz1991} again, we get that $A$ is geometrically irreducible if and only if $\rep (A, \bd)$ is irreducible for each dimension vector~$\bd$.

The group
\[
\GL (\bd) := \prod_{i \in Q_0} \GL_{d_i} (K)
\]
acts on $\rep(A, \bd)$ by conjugation. The $\GL (\bd)$-orbits are in bijection with the isomorphism classes of $A$-representations with dimension vector $\bd$. For a representation $M$ we denote the corresponding orbit by $\cO_M$. Then
\begin{equation} \label{eq orbit}
\dim \cO_M = \dim \GL (\bd) - \dim_K \End_A (M).
\end{equation}
An orbit $\cO_M$ is called \emph{maximal} if $\cO_M$ is not contained in the closure of another orbit.


\section{Preliminary observations}


\subsection{The possible quivers}
We recall some facts on geometrically irreducible algebras from~\cite{BobinskiSchroer2017a}. If $A = K Q / \cI$ is a bound quiver algebra and $M$ is an $A$-representation, then $M$ is called \emph{locally free} if, for each vertex $i \in Q_0$, $M (i)$ is a free $A_i$-module (recall that $A_i := e_i A e_i$).

\begin{Prop} \label{prop known}
Let $A = K Q / \cI$ be a geometrically irreducible bound quiver algebra. Then the following hold:
\begin{enumerate}

\item
Every oriented cycle in $Q$ is a power of a loop.

\item
For each vertex $i \in Q_0$, there is at most one loop $\alpha$ with $s (\alpha) = i = t (\alpha)$.

\item
Projective and injective $A$-representations are locally free.

\end{enumerate}
\end{Prop}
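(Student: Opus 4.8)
The governing principle for all three parts is the criterion recalled in Section~2: $A$ is geometrically irreducible if and only if $\rep(A,\bd)$ is irreducible for every dimension vector $\bd$. For (1) and (2) I would argue by contraposition, exhibiting a single $\bd$ with reducible $\rep(A,\bd)$, whereas (3) is a genericity statement squeezed out of irreducibility together with (1) and (2). For (1), suppose $Q$ has an oriented cycle that is not a power of a loop and choose one, say $c$, of minimal length; it passes through pairwise distinct vertices $v_1,\dots,v_k$ with $k\ge 2$ along arrows $\gamma_i\df v_i\to v_{i+1}$ (indices mod $k$). Minimality excludes genuine chords, since an arrow among the $v_i$ that is neither a loop nor parallel to some $\gamma_i$ would close up a strictly shorter cycle through at least two distinct vertices. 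I then take $\bd$ with $d_{v_i}=1$ and $d_w=0$ otherwise. Every arrow meeting a vertex outside $\{v_1,\dots,v_k\}$ is forced to $0$, so $\rep(A,\bd)$ sits inside the affine space of scalars attached to the loops at the $v_i$ and to the (possibly multiple) arrows among them. Admissibility of $\cI$ makes every oriented cycle nilpotent, hence zero in dimension one: each loop scalar vanishes and the product $x_1\cdots x_k$ of the cycle scalars vanishes. A short analysis of the resulting equations shows $\rep(A,\bd)$ is a union of at least two coordinate subspaces — for a lone cycle with no parallel arrows it is exactly the union of the $k$ hyperplanes $\{x_i=0\}$ — hence reducible, a contradiction. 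The one technical point is the bookkeeping when parallel arrows produce relations with several terms; a direct check shows the scheme still decomposes.

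Statement (2) is then immediate: two distinct loops $\alpha,\beta$ at a vertex $i$ would make $\alpha\beta$ an oriented cycle of length two that is not a power of a loop, contradicting (1). (Alternatively one restricts to $\bd=d\,e_i$, notes that $\rep(A,d\,e_i)=\rep(B,d)$ for the one-vertex algebra $B$ generated by the loops at $i$, and invokes the classification of geometrically irreducible local algebras as truncated polynomial algebras.)

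For (3), I first combine (1) and (2): by (1) every path from $i$ to $i$ is a power of the single loop permitted by (2), so $A_i=K[\vep_i]/(\vep_i^{m_i})$ is a truncated polynomial algebra, in particular self-injective, and an $A_i$-module is free precisely when the nilpotent operator $\vep_i$ realizes the maximal rank of $\vep_i^{m_i-1}$. The goal is to show that for a projective $P=A e_i$ each $P(j)=e_jAe_i$ is free over $A_j$. Putting $\bd=\bdim P$, since $\Ext^1_A(P,P)=0$, Voigt's lemma identifies $\Ext^1_A(P,P)$ with $T_P\rep(A,\bd)/T_P\cO_P$, so $\cO_P$ is a smooth open point and, by irreducibility of $\rep(A,\bd)$, is dense. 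The locally free representations form an open subset $U\subseteq\rep(A,\bd)$, cut out by the maximal-rank conditions on the operators $M(\vep_j)^{m_j-1}$. If $U\neq\emptyset$ it meets the dense orbit $\cO_P$, forcing $P$ itself to be locally free. The injective case then follows by applying the projective case to $A^{\mathrm{op}}$, which is again geometrically irreducible because $\rep(A^{\mathrm{op}},\bd)\cong\rep(A,\bd)$, and transporting back along the duality $D=\Hom_K(-,K)$, under which local freeness is preserved since the corners $A_j$ are commutative.

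The genuine obstacle is the nonemptiness of $U$, which holds exactly when $m_j\mid\dim_K e_jAe_i$ for all $j$; and this divisibility is in turn equivalent to the freeness of each $e_jAe_i$ as an $A_j$-module, i.e.\ to the assertion of (3) itself. Thus the genericity argument above only reduces (3) to a statement about how $\vep_j$ acts on the bimodule $e_jAe_i$, and the heart of the matter is showing that no relation of the form $\vep_j q$ (with $q$ a path into $j$ not beginning with the loop) can truncate this action. This is precisely the step where irreducibility must be fed back in, by checking that such an obstructing relation would render $\rep(A,\bd')$ reducible for a suitable small $\bd'$; I expect this to be the most delicate part of the proof.
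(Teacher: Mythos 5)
First, a caveat about the comparison: the paper does not prove this proposition at all --- it simply cites Lemmas 3.2, 3.4 and 4.1 of the prequel \cite{BobinskiSchroer2017a} --- so any self-contained argument is by definition a different route, and has to be judged on its own. Your proof of (1) contains a genuine error: an oriented cycle of minimal length among those that are not powers of loops need \emph{not} pass through pairwise distinct vertices. If there are two distinct loops $\vep, \delta$ at a vertex $i$, then $\vep \delta$ is such a minimal cycle with $k = 2$ but only one vertex, and your dimension vector (all $d_{v_j} = 1$) then forces both scalars to vanish by nilpotency, so $\rep (A, \bd) = \{ 0 \}$ is irreducible and yields no contradiction. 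Since you deduce (2) from (1), the argument is circular in exactly the case it must exclude. The repair is to reverse the order: prove (2) first by your parenthetical alternative (restriction to $\bd = d e_i$ and the prequel's classification of geometrically irreducible local algebras as truncated polynomial algebras), after which the splitting of a revisited vertex really does reduce a minimal counterexample to one with pairwise distinct vertices and no loops among its arrows. With that fix the rest of (1) goes through and can be streamlined: you need not determine the actual decomposition of $\rep (A, \bd)$ into coordinate subspaces (zero relations on subpaths of the cycle can cut it strictly below the union of the $k$ hyperplanes, so your ``exactly the union'' claim is also not right); it suffices that $\rep (A, \bd) \subseteq \bigcup_i \{ x_i = 0 \}$, while the representation with $x_i = 1$ and all other coordinates zero satisfies every relation (each path of length at least $2$ vanishes on it, as $\gamma_i$ is not a loop), so $\rep (A, \bd)$ lies in no single hyperplane and is therefore reducible.

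The more serious gap is in (3), and you name it yourself. The genericity reduction is correct as far as it goes: $\cO_P$ is open and dense, the locally free locus $U$ is open, and $U \neq \emptyset$ exactly when $m_j$ divides $d_j = \dim_K e_j A e_i$ for every $j$ (one then exhibits a point of $U$ with all non-loop arrows zero and each loop in free Jordan form). But that divisibility \emph{is} the content of the statement; without it nothing has been proved, and your closing sentences concede that the step where irreducibility must actually be fed in --- showing that no relation can truncate the action of $\vep_j$ on $e_j A e_i$ by exhibiting a reducible $\rep (A, \bd')$ --- is only conjectured, not carried out. So (3) is not established. (A small correction along the way: $m_j \mid \dim_K e_j A e_i$ is necessary for, but not by itself equivalent to, freeness of $e_j A e_i$ over $A_j$; it is the combination with your density argument that yields the equivalence.)
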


\begin{proof}
(1) is~\cite{BobinskiSchroer2017a}*{Lemma~3.4}, (2) is~\cite{BobinskiSchroer2017a}*{Lemma~3.2}, and (3) follows from~\cite{BobinskiSchroer2017a}*{Lemma~4.1}.
\end{proof}

For $h \geq 0$, let $Q' (h)$, $Q'' (h)$ and $Q''' (h)$ be the following quivers respectively:
\begin{gather*}
\\
\xymatrix{
0 \ar@(ul,dl)_{\vep_0} & 1 \ar@<-1ex>[l]_{\alpha_1} \ar@<1ex>^{\alpha_h}_{\cdots}[l]
}, \qquad
\xymatrix{
0 & 1 \ar@(ur,dr)^{\vep_1} \ar@<-1ex>[l]_{\alpha_1} \ar@<1ex>^{\alpha_h}_{\cdots}[l]
}, \qquad
\xymatrix{
0 & 1 \ar@<-1ex>[l]_{\alpha_1} \ar@<1ex>^{\alpha_h}_{\cdots}[l]
}.
\end{gather*}
We have the following consequence of Proposition~\ref{prop known}.

\begin{Cor} \label{coro quiver}
If $A = K Q / \cI$ is a geometrically irreducible bound quiver algebra with exactly two simples, then $Q$ is one of the quivers $Q (h)$, $Q' (h)$, $Q'' (h)$ and $Q''' (h)$, $h \geq 0$. In particular, $A \cong A' (h, m_0, m_1) / \cJ$, for some $m_0, m_1 \geq 1$ and an ideal $\cJ$.
\end{Cor}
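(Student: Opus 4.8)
The plan is to derive Corollary~\ref{coro quiver} as a direct structural consequence of Proposition~\ref{prop known}. Since $A$ has exactly two simples, $|Q_0| = 2$, so I write $Q_0 = \{0, 1\}$. The proof proceeds by classifying all possible arrows in $Q$ under the two constraints supplied by parts~(1) and~(2) of Proposition~\ref{prop known}, and then identifying the resulting quiver with one of $Q(h)$, $Q'(h)$, $Q''(h)$, $Q'''(h)$.

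First I would handle the loops. By Proposition~\ref{prop known}(2), there is at most one loop at vertex $0$ and at most one loop at vertex $1$; call them $\vep_0$ and $\vep_1$ when present. This gives four cases according to which loops occur, matching exactly the four quiver shapes: both loops present corresponds to $Q(h)$, only the loop at $0$ to $Q'(h)$, only the loop at $1$ to $Q''(h)$, and neither loop to $Q'''(h)$.

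Next I would bound and orient the non-loop arrows between $0$ and $1$. Any arrow $\alpha$ with $s(\alpha) \neq t(\alpha)$ runs either $1 \to 0$ or $0 \to 1$. The key observation is that $Q$ cannot contain both an arrow $0 \to 1$ and an arrow $1 \to 0$: their composite would be an oriented cycle that is not a power of a loop, contradicting Proposition~\ref{prop known}(1). Hence all non-loop arrows point the same way, and after possibly relabeling the two vertices I may assume they all go $1 \to 0$, which I denote $\alpha_1, \ldots, \alpha_h$ for some $h \geq 0$. (Here one must check that the relabeling is compatible with whichever loops are present; in the mixed cases $Q'(h)$ and $Q''(h)$ the choice of which vertex carries the loop is itself part of the normal form, so the orientation convention is fixed accordingly.) This exhausts all arrows, so $Q$ is one of the four listed quivers.

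Finally, for the last sentence, I note that in every one of the four cases $Q$ is a subquiver of $Q(h)$, so $K Q$ is a quotient of $K Q(h)$; concretely, the missing loops simply correspond to taking $m_0 = 1$ or $m_1 = 1$ in the defining relations $\vep_0^{m_0}, \vep_1^{m_1}$ of $A'(h, m_0, m_1)$, since $\vep_i^1 = \vep_i$ forces the loop to act as zero. Thus $A' (h, m_0, m_1)$ with suitable $m_0, m_1 \geq 1$ surjects onto $A$, giving the presentation $A \cong A'(h, m_0, m_1)/\cJ$ for an appropriate ideal $\cJ$. I do not expect a serious obstacle here: the entire argument is an elementary case analysis, and the only point requiring mild care is keeping the loop-placement and arrow-orientation conventions consistent across the four cases so that the identification with the named quivers is genuinely canonical rather than merely up to relabeling.
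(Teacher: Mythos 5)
Your proposal is correct and follows essentially the same route as the paper: the quiver shape is read off from Proposition~\ref{prop known}(1) and (2) (the paper leaves this ``immediate,'' you spell it out), and the presentation $A \cong A'(h, m_0, m_1)/\cJ$ is obtained by taking $m_i = 1$ for the absent loops. The only detail the paper makes explicit that you leave implicit in the phrase ``suitable $m_0, m_1$'' is that when a loop $\vep_i$ \emph{is} present, some $m_i \geq 2$ with $\vep_i^{m_i} \in \cI$ exists because $\cI$ is admissible.
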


\begin{proof}
The former part follows immediately from Proposition~\ref{prop known}~(1) and~(2). For the latter observe, that if $Q = Q' (h)$, then we take $m_1 = 1$. If $Q = Q'' (h)$, then $m_0 = 1$, while for $Q = Q''' (h)$, $m_0 = 1 = m_1$ (and $\cJ = 0)$. Finally, if $\vep_0$ (resp. $\vep_1$) is present, then there exists $m_0 \geq 2$ (resp. $m_1 \geq 2$) such that $\vep_0^{m_0} \in \cI$ (resp. $\vep_1^{m_1} \in \cI$), since $\cI$ is admissible.
\end{proof}

The following observation shows that $A$ is geometrically irreducible, if $\cJ = 0$.

\begin{Prop} \label{prop Aprim}
If $A \cong A' (h, m_0, m_1)$, for $m_0, m_1 \geq 1$ and $h \geq 0$, then $A$ is geometrically irreducible.
\end{Prop}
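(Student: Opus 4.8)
The plan is to exploit the extremely simple defining relations of $A' (h, m_0, m_1)$. Since the only relations are $\vep_0^{m_0}$ and $\vep_1^{m_1}$, and these constrain the two loops separately while imposing no condition whatsoever on the arrows $\alpha_1, \ldots, \alpha_h$, the scheme $\rep (A, \bd)$ splits, for a dimension vector $\bd = (d_0, d_1)$, as a product
\[
\rep (A, \bd) \cong N_{m_0} (d_0) \times N_{m_1} (d_1) \times \bbA^{h d_0 d_1},
\]
where $N_m (d) := \{ f \in \End_K (K^d) \mid f^m = 0 \}$ is the scheme of $m$-nilpotent endomorphisms and the affine factor records the unconstrained maps $M (\alpha_j) \df K^{d_1} \to K^{d_0}$. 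As the affine space is irreducible and $K$ is algebraically closed, it suffices to prove that each factor $N_m (d)$ is irreducible: the product of irreducible $K$-schemes of finite type over an algebraically closed field is again irreducible. This reduces the whole statement to a single, purely linear-algebraic irreducibility claim.

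To establish that $N_m (d)$ is irreducible I would invoke the classification of nilpotent orbits. Under the conjugation action of $\GL_d (K)$, the $K$-points of $N_m (d)$ are exactly the nilpotent matrices all of whose Jordan blocks have size at most $m$, and these decompose into orbits $\cO_\lambda$ indexed by partitions $\lambda$ of $d$ with every part $\le m$. Writing $d = q m + r$ with $0 \le r < m$, a direct comparison of partial sums shows that the partition $\lambda^\ast = (m, \ldots, m, r)$, with $q$ parts equal to $m$, dominates every such $\lambda$. Since nilpotent orbit closures are governed by the dominance order, the closure $\ol{\cO_{\lambda^\ast}}$ contains every other orbit appearing in $N_m (d)$, whence $N_m (d) = \ol{\cO_{\lambda^\ast}}$ is the closure of a single orbit and therefore irreducible. (Because $N_m (d)$ and its reduced subscheme share the same underlying topological space, this gives scheme-theoretic irreducibility.)

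Combining the two steps, $\rep (A, \bd)$ is irreducible for every $\bd$, which by the criterion recalled earlier is equivalent to $A$ being geometrically irreducible. The affine factor and the product formalism are routine; the content is concentrated entirely in the nilpotent varieties $N_{m_0}$ and $N_{m_1}$.

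I expect the main obstacle to be precisely the irreducibility of $N_m (d)$. Although elementary in spirit, it rests on two inputs from the geometry of nilpotent matrices: the partition parameterization of orbits and the description of orbit closures via the dominance order. The only point genuinely needing care is the verification that $\lambda^\ast$ is the unique dominance-maximal partition with parts bounded by $m$, which amounts to the inequalities $\sum_{i \le k} \lambda_i \le k m = \sum_{i \le k} \lambda^\ast_i$ for $k \le q$ together with the trivial bound $\sum_{i \le k} \lambda_i \le d$ for larger $k$.
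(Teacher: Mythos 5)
Your proof is correct and follows essentially the same route as the paper: the identical product decomposition of $\rep(A,\bd)$ into the two bounded-nilpotency loci and an affine space recording the unconstrained maps $M(\alpha_j)$, followed by irreducibility of each factor. The only difference is that where the paper cites Gerstenhaber for the irreducibility of the variety of $m$-nilpotent endomorphisms, you supply the standard orbit-stratification and dominance-order argument, which is precisely the content of that citation.
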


\begin{proof}
Observe that in the situation of the proposition, for each dimension vector $\bd = (d_0, d_1)$, we have
\[
\rep (A, \bd) \cong \rep (A_0, d_0) \times \rep (B, \bd) \times \rep (A_1, d_1),
\]
where $B$ is the path algebra of the quiver $Q''' (h)$. Since $B$ is hereditary, while $A_0 = K [X] / (X^{m_0})$ and $A_1 = K [X] / (X^{m_1})$, we know that $B$, $A_0$ and $A_1$ are geometrically irreducible (see for example~\cite{Gerstenhaber} for the claim in case of $A_0$ and $A_1$), hence the claim follows.
\end{proof}

In Section~\ref{sect proof one} we study the case, when $\cJ \neq 0$.

\subsection{Representation theory of the truncated polynomial algebras}
We need to recall well-known facts from the representation theory of the truncated polynomial algebras.

Let $\Lambda = K [X] / (X^m)$, $m \geq 1$. It is well-known that, for given $d \geq 1$, the isomorphism classes of $d$-dimensional $\Lambda$-modules are parameterized by the partitions of $d$ with parts at most $m$ (we denote the set of such partitions by $\cP_m (d)$): a partition $\bp = (p_1, \ldots, p_l) \in \cP_m (d)$ corresponds to a $\Lambda$-module $M_\bp$ with the action of $X$ given by the matrix
\[
J_\bp := \begin{bmatrix}
J_{p_1} & 0 & \cdots & 0
\\
0 & J_{p_2} & \ddots & \vdots
\\
\vdots & \ddots & \ddots & 0
\\
0 & \cdots & 0 & J_{p_l}
\end{bmatrix},
\]
where $J_p$ denotes the nilpotent Jordan matrix of size $p$ (with ones over the diagonal). Obviously, in the situation above we have
\[
M_\bp = M_{(p_1)} \oplus \cdots \oplus M_{(p_l)}.
\]
If $1 \leq p, q \leq m$, then
\begin{equation} \label{eq Hom}
\dim_K \Hom_\Lambda (M_{(p)}, M_{(q)}) = \min \{ p, q \}.
\end{equation}
A partition $\bp \in \cP_m (d)$ is called \emph{maximal} if $\bp = (m, \ldots, m, r)$, for some $1 \leq r \leq m$. A partition $\bp$ is maximal if and only if the orbit $\cO_\bp := \cO_{M_\bp}$ is maximal in $\rep (\Lambda, d)$. Note that~\eqref{eq Hom} together with~\eqref{eq orbit} gives a method for calculating $\dim \cO_\bp$ for $\bp \in \cP_m (d)$. As a consequence we get the following well-known formula, which we need later.

\begin{Lem} \label{Lem orbits}
If $p \geq 2$, then
\[
\dim \cO_{(p)} - \dim \cO_{(p - 1, 1)} = 2.
\]
\end{Lem}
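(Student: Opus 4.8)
The plan is to reduce the difference of orbit dimensions to a difference of endomorphism dimensions using the general formula~\eqref{eq orbit}, and then to evaluate both endomorphism dimensions by the Hom formula~\eqref{eq Hom}. The key observation is that the modules $M_{(p)}$ and $M_{(p-1,1)}$ both have dimension $d = p$, so they lie in the same representation space $\rep(\Lambda, p)$, where $\Lambda = K[X]/(X^m)$. Since there is a single vertex, $\GL(\bd) = \GL_p(K)$ in both cases, and in particular $\dim \GL(\bd)$ takes the same value, namely $p^2$, for both modules.

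\textbf{Reduction.} Applying~\eqref{eq orbit} to each of the two modules and subtracting, the two copies of $\dim \GL(\bd)$ cancel, so that
\[
\dim \cO_{(p)} - \dim \cO_{(p-1,1)} = \dim_K \End_\Lambda(M_{(p-1,1)}) - \dim_K \End_\Lambda(M_{(p)}).
\]
Thus it suffices to compute the two endomorphism dimensions.

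\textbf{The two computations.} For the single Jordan block, $\End_\Lambda(M_{(p)}) = \Hom_\Lambda(M_{(p)}, M_{(p)})$, so~\eqref{eq Hom} gives dimension $\min\{p,p\} = p$. For the decomposable module, I would use $M_{(p-1,1)} = M_{(p-1)} \oplus M_{(1)}$ and expand the endomorphism space as the direct sum of the four homomorphism spaces between the summands. By~\eqref{eq Hom} these have dimensions $\min\{p-1,p-1\} = p-1$, $\min\{p-1,1\} = 1$, $\min\{1,p-1\} = 1$, and $\min\{1,1\} = 1$, which sum to $p+2$. Substituting into the displayed reduction yields $(p+2) - p = 2$, as claimed; the hypothesis $p \geq 2$ is exactly what guarantees that the partition $(p-1,1)$ has a genuine second part and that $\min\{p-1,1\} = 1$.

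\textbf{Main obstacle.} There is no serious obstacle here: the statement is a direct consequence of the two formulas recalled just before it. The only point requiring a little care is the bookkeeping for the decomposable module, namely remembering to include both off-diagonal Hom-spaces $\Hom_\Lambda(M_{(p-1)}, M_{(1)})$ and $\Hom_\Lambda(M_{(1)}, M_{(p-1)})$ in addition to the two diagonal terms; omitting one of them would give the wrong count $p+1$ and destroy the result.
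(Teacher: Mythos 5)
Your proposal is correct and follows essentially the same route as the paper: both compute $\dim_K \End_\Lambda(M_{(p)}) = p$ and $\dim_K \End_\Lambda(M_{(p-1,1)}) = (p-1)+1+1+1 = p+2$ via the four Hom-spaces between the summands, and then conclude from the orbit dimension formula~\eqref{eq orbit} since the $\GL_p(K)$ terms cancel. No gaps.
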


\begin{proof}
We have $\dim_K \End_\Lambda (M_{(p)}) = p$ and
\[
\dim_K \End_\Lambda (M_{(p - 1, 1)}) = (p - 1) + 1 + 1 + 1 = p + 2,
\]
hence the formula follows.
\end{proof}

\subsection{Stratification} \label{sub strat}
The main tool which we use in the proof of Theorem~\ref{thm:intro3} is the stratification by partitions, which we introduce now.

Let $A = A' (h, m_0, m_1) / \cJ$, for $m_0, m_1 \geq 1$, $h \geq 0$, and an ideal $\cJ$. Since $A' (h, m_0, m_1)$ is a monomial bound quiver algebra (i.e., the defining ideal in $Q (h)$ can be generated paths), we may identify $A' (h, m_0, m_1)$ with the subspace of $K Q (h)$ spanned by paths which do not contain subpaths $\vep_0^{m_0}$ and $\vep_1^{m_1}$ (we stress that $A' (h, m_0, m_1)$ is not a subalgebra of $K Q (h)$). Consequently, if we assume that $m_0$, $m_1$ and $h$ are minimal, then $\cJ$ consists of relations $\rho$ such that $s (\rho) = 1$ and $t(\rho) = 0$.

Observe that $A_0$ and $A_1$ are truncated polynomial algebras and if $M$ is an $A$-representation, then $M (0)$ and $M (1)$ together with the maps $M (\vep_0)$ and $M (\vep_1)$, respectively, are $A_0$- and $A_1$-modules, respectively. By abuse of notation we denote these modules by $M (0)$ and $M (1)$. Consequently, if $\bd = (d_0, d_1)$ is a dimension vector, $\bp \in \cP_{m_0} (d_0)$ and $\bq \in \cP_{m_1} (d_1)$, then we may define
\[
\cS_{\bp, \bq} := \{ M \in \rep (A, \bd) \mid \text{$M (0) \cong M_\bp$ and $M (1) \cong M_\bq$} \}.
\]
If $\rho$ is a relation in $Q (h)$, $M' \in \Hom_K(K^{d_0}, K^{d_0})$, $M'' \in \Hom_K(K^{d_1}, K^{d_1})$, and $M_1, \ldots, M_h \in \Hom_K(K^{d_1}, K^{d_0})$, then we put
\[
\rho (M', M'', M_1, \ldots, M_h) := M (\rho),
\]
where $M \in \rep (K Q (h), \bd)$ is defined by
\begin{align*}
M(\alpha) :=
\begin{cases}
M' & \text{if $\alpha = \vep_0$},
\\
M'' & \text{if $\alpha = \vep_1$},
\\
M_i & \text{if $\alpha = \alpha_i$, $i = 1, \ldots, h$}.
\end{cases}
\end{align*}

The following observations will be important.

\begin{Prop} \label{prop strat}
Let $A$, $\bd$, $\bp$ and $\bq$ be as above.
\begin{enumerate}

\item \label{item strat1}
$\cS_{\bp, \bq}$ is an irreducible locally closed subset of $\rep (A, \bd)$ of dimension
\[
\dim \cO_\bp + \dim \cO_\bq + \dim \bbA_{\bp, \bq},
\]
where
\begin{multline*}
\bbA_{\bp, \bq} := \{ (M_1, \ldots, M_h) \in (\Hom_K (K^{d_1}, K^{d_0}))^h \mid
\\
\text{$\rho (J_\bp, J_\bq, M_1, \ldots, M_h) = 0$ for each $\rho \in \cJ$} \}.
\end{multline*}

\item \label{item strat2}
If $\bp$ and $\bq$ are maximal, then $\ol{\cS}_{\bp, \bq}$ is an irreducible component of $\rep (A, \bd)$.

\end{enumerate}
\end{Prop}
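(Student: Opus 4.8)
The plan is to realize $\cS_{\bp,\bq}$ as the total space of a $\GL(\bd)$-homogeneous fibration over $\cO_\bp\times\cO_\bq$ whose fibre is the affine space $\bbA_{\bp,\bq}$, and then to read off both assertions from this description.

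For part~\eqref{item strat1}, the decisive observation is that the relations in $\cJ$ are especially simple. As noted in Section~\ref{sub strat}, every $\rho\in\cJ$ is a linear combination of paths $\sigma$ with $s(\sigma)=1$ and $t(\sigma)=0$; since $\vep_0,\vep_1$ are the only loops and $\alpha_1,\dots,\alpha_h$ the only remaining arrows (all pointing from $1$ to $0$), each such $\sigma$ has the form $\vep_0^a\alpha_i\vep_1^b$, so that $M(\sigma)=(M')^aM_i(M'')^b$. Hence $\rho(M',M'',M_1,\dots,M_h)$ is, for fixed $M'$ and $M''$, a $K$-linear expression in $(M_1,\dots,M_h)$. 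In particular $\bbA_{\bp,\bq}$ is a linear subspace of $(\Hom_K(K^{d_1},K^{d_0}))^h$, hence irreducible of dimension $\dim\bbA_{\bp,\bq}$. The set $\cS_{\bp,\bq}$ is locally closed because it is the intersection of the closed subscheme $\rep(A,\bd)$ with the preimages, under the coordinate projections $M\mapsto M(\vep_0)$ and $M\mapsto M(\vep_1)$, of the orbits $\cO_\bp$ and $\cO_\bq$, both of which are locally closed.

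Next I would exploit the conjugation action of $G:=\GL(\bd)$, which preserves $\cS_{\bp,\bq}$ and acts on the pair $(M(\vep_0),M(\vep_1))$ with $\cO_\bp\times\cO_\bq$ as a single orbit. Consider the morphism
\[
\psi\df G\times\bbA_{\bp,\bq}\to\cS_{\bp,\bq},\qquad (g,(M_1,\dots,M_h))\mapsto g\cdot(J_\bp,J_\bq,M_1,\dots,M_h).
\]
Transitivity of $G$ on $\cO_\bp\times\cO_\bq$ shows $\psi$ is surjective, and its source is irreducible (a product of the irreducible group $G$ with a vector space), so $\cS_{\bp,\bq}$ is irreducible. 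Every fibre of $\psi$ is a coset of the stabilizer $H$ of $(J_\bp,J_\bq)$ in $G$, with the $\bbA_{\bp,\bq}$-coordinate then determined; thus all fibres have dimension $\dim H$, and the fibre-dimension theorem gives $\dim\cS_{\bp,\bq}=\dim G+\dim\bbA_{\bp,\bq}-\dim H$. Since $H$ is the product of the centralizers of $J_\bp$ and $J_\bq$, of dimensions $\dim_K\End_{A_0}(M_\bp)$ and $\dim_K\End_{A_1}(M_\bq)$, formula~\eqref{eq orbit} yields $\dim G-\dim H=\dim\cO_\bp+\dim\cO_\bq$, which is exactly the claimed dimension.

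For part~\eqref{item strat2}, I would first note that $\rep(A,\bd)=\bigcup_{\bp',\bq'}\cS_{\bp',\bq'}$ is a finite union over $\bp'\in\cP_{m_0}(d_0)$ and $\bq'\in\cP_{m_1}(d_1)$, since the isomorphism type of $M(0)$ (resp.\ $M(1)$) as an $A_0$- (resp.\ $A_1$-)module is given by a unique such partition. Each stratum is nonempty (take all $M_i=0$) and, by part~\eqref{item strat1}, irreducible, so the irreducible components of $\rep(A,\bd)$ are exactly the maximal members of $\{\ol\cS_{\bp',\bq'}\}$. It therefore suffices to show that $\ol\cS_{\bp,\bq}\subseteq\ol\cS_{\bp',\bq'}$ forces $(\bp',\bq')=(\bp,\bq)$ when $\bp,\bq$ are maximal. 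Applying the projection $M\mapsto M(0)$ to $\ol\cS_{\bp',\bq'}$ gives $M(0)\in\ol\cO_{\bp'}$ for every $M$ in this closure; choosing $M\in\cS_{\bp,\bq}$ yields $M_\bp\in\ol\cO_{\bp'}$. As $\bp$ is maximal, $\cO_\bp$ is the dense orbit in $\rep(A_0,d_0)$, so $M_\bp\in\ol\cO_{\bp'}$ is possible only when $\cO_{\bp'}$ is itself dense, i.e.\ $\bp'=\bp$; the same argument at vertex $1$ gives $\bq'=\bq$. Hence $\ol\cS_{\bp,\bq}$ is maximal, and the claim follows.

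The technical heart is the purely combinatorial observation that the $\cJ$-relations depend linearly on the $M_i$, which is what makes $\bbA_{\bp,\bq}$ a vector space and hence the fibration have irreducible fibres of constant dimension; once this is in place, the homogeneity of the base under $G$ does the rest. The point requiring the most care is the constancy of the fibre dimension of $\psi$ together with the bookkeeping identifying $\dim G-\dim H$ with $\dim\cO_\bp+\dim\cO_\bq$ via~\eqref{eq orbit}. Alternatively one may phrase $\cS_{\bp,\bq}$ as the associated bundle $G\times_H\bbA_{\bp,\bq}$ and quote its standard dimension, but one must still check the $G$-stability of $\cS_{\bp,\bq}$ and that $H$ preserves the fibre $\bbA_{\bp,\bq}$.
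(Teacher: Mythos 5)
Your argument is correct and complete. Note, however, that the paper itself does not prove this proposition: the displayed proof is a one-line citation to Subsection~5.3 of the prequel \cite{BobinskiSchroer2017a}, so there is no in-paper argument to compare against. What you supply is the standard homogeneous-fibration proof, and it is almost certainly the content of the cited subsection: the key points are all present and correctly handled --- that every $\rho \in \cJ$ is a combination of paths $\vep_0^a\alpha_j\vep_1^b$, so that $\bbA_{\bp,\bq}$ is a linear subspace (in particular nonempty, which also makes every stratum nonempty); that $\psi\df G\times\bbA_{\bp,\bq}\to\cS_{\bp,\bq}$ is surjective with all fibres isomorphic to the stabilizer $H$ of $(J_\bp,J_\bq)$, so Chevalley's fibre-dimension theorem applies; that $\dim H=\dim_K\End_{A_0}(M_\bp)+\dim_K\End_{A_1}(M_\bq)$ converts $\dim G-\dim H$ into $\dim\cO_\bp+\dim\cO_\bq$ via~\eqref{eq orbit}; and, for part~\eqref{item strat2}, that the finite decomposition of $\rep(A,\bd)$ into irreducible strata reduces the claim to non-containment of $\ol\cS_{\bp,\bq}$ in any other stratum closure, which your projection argument (using that the maximal partition gives the unique dense orbit in $\rep(A_i,d_i)$, i.e.\ Gerstenhaber's irreducibility result quoted in Proposition~\ref{prop Aprim}) settles. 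The only cosmetic remark is that you could phrase the last step with ``maximal orbit'' rather than ``dense orbit'', matching the paper's terminology, but since the variety of nilpotent matrices of bounded nilpotency index is irreducible the two are equivalent here.
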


\begin{proof}
See~\cite{BobinskiSchroer2017a}*{Subsection~5.3}.
\end{proof}

Let $\bd = (d_0,d_1)$ be a dimension vector.
If $\bp_0 \in \cP_{m_0} (d_0)$ and $\bq_0 \in \cP_{m_1} (d_1)$ are maximal, then we say that $(\bp_0, \bq_0)$ is a \emph{maximal pair of partitions} for $\bd$ and put $\cS_{\max} := \cS_{\bp_0, \bq_0}$. Proposition~\ref{prop strat}~\eqref{item strat2} implies that in order to prove that $\rep (A, \bd)$ is reducible, it is sufficient to find $\bp_1 \in \cP_{m_0} (d_0)$ and $\bq_1 \in \cP_{m_1} (d_1)$ such that $(\bp_1, \bq_1)$ is not maximal and
\[
\dim \cS_{\bp_1, \bq_1} \geq \dim \cS_{\max}.
\]
For $\bp \in \cP_{m_0} (d_0)$ and $\bq \in \cP_{m_1} (d_1)$, let $c_{\bp, \bq}$ be the codimension of $\bbA_{\bp, \bq}$ inside $(\Hom_K (K^{d_1}, K^{d_0}))^h$. Using Proposition~\ref{prop strat}~\eqref{item strat1} we get
\begin{multline} \label{eq diff}
\dim \cS_{\bp_1, \bq_1} - \dim \cS_{\max} = (c_{\bp_0, \bq_0} - c_{\bp_1, \bq_1})
\\
+ (\dim \cO_{\bp_1} - \dim \cO_{\bp_0}) + (\dim \cO_{\bq_1} - \dim \cO_{\bq_0}).
\end{multline}
In all the situations we apply this argument we will have (up to duality) that $\bq_1 = \bq_0$, $\bp_0 = (p)$ and $\bp_1 = (p - 1, 1)$ with $p \geq 2$. If this is the case, then Lemma~\ref{Lem orbits} implies that $\dim \cS_{\bp_1, \bq_1} \geq \dim \cS_{\max}$ if and only if $c_{\bp_0, \bq_0} - c_{\bp_1, \bq_1} \geq 2$. We formulate this result.

\begin{Cor} \label{cor irr}
Let $(\bp, \bq)$ be a maximal pair of partitions for a dimension vector $\bd$, such that $\bp = (p)$ for some $p \geq 2$. If
\[
c_{\bp, \bq} \geq c_{(p - 1, 1), \bq} + 2,
\]
then $\rep (A, \bd)$ is reducible. \qed
\end{Cor}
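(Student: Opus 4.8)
The goal of Corollary~\ref{cor irr} is essentially a bookkeeping reduction: it isolates the combinatorial inequality on codimensions that guarantees reducibility. The plan is to combine the dimension formula~\eqref{eq diff} with Lemma~\ref{Lem orbits}, feeding the result into the reducibility criterion recorded just before the corollary via Proposition~\ref{prop strat}~\eqref{item strat2}.

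First I would set $\bp_0 := \bp = (p)$, $\bp_1 := (p - 1, 1)$, and $\bq_0 = \bq_1 := \bq$, so that $(\bp_1, \bq_1) = ((p-1,1), \bq)$ is a non-maximal pair of partitions for $\bd$ (non-maximal because $\bp_1 = (p-1,1)$ is not of the form $(m_0, \ldots, m_0, r)$ when $p \geq 2$, so it fails the maximality condition from the stratification subsection). Then I would substitute into~\eqref{eq diff}. Since $\bq_1 = \bq_0$, the term $\dim \cO_{\bq_1} - \dim \cO_{\bq_0}$ vanishes, and Lemma~\ref{Lem orbits} gives $\dim \cO_{\bp_1} - \dim \cO_{\bp_0} = \dim \cO_{(p-1,1)} - \dim \cO_{(p)} = -2$. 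Hence~\eqref{eq diff} collapses to
\[
\dim \cS_{\bp_1, \bq_1} - \dim \cS_{\max} = (c_{\bp, \bq} - c_{(p-1,1), \bq}) - 2.
\]
Under the hypothesis $c_{\bp, \bq} \geq c_{(p-1,1), \bq} + 2$, the right-hand side is $\geq 0$, so $\dim \cS_{\bp_1, \bq_1} \geq \dim \cS_{\max}$.

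To conclude, I would invoke the reducibility criterion spelled out after Proposition~\ref{prop strat}: since $(\bp_1, \bq_1)$ is not maximal yet has $\dim \cS_{\bp_1, \bq_1} \geq \dim \cS_{\max}$, the closure $\ol{\cS}_{\bp_1, \bq_1}$ cannot be contained in the irreducible component $\ol{\cS}_{\max}$ of smaller or equal dimension unless they coincide, which they cannot since the strata are disjoint; thus $\rep (A, \bd)$ has at least two distinct irreducible components and is reducible.

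Honestly, there is no real obstacle here — the statement is designed to be an immediate corollary, and essentially all the content has already been extracted into~\eqref{eq diff}, Lemma~\ref{Lem orbits}, and Proposition~\ref{prop strat}~\eqref{item strat2}. The only point requiring a moment of care is confirming that $(p-1,1)$ is genuinely non-maximal for $p \geq 2$ (so that the criterion applies), and that the maximal stratum $\cS_{\max}$ does provide a bona fide irreducible component against which to compare; both follow directly from the maximality characterization and Proposition~\ref{prop strat}~\eqref{item strat2}. The corollary is proved by these substitutions, which is why the author writes \texttt{\textbackslash qed} directly in the statement rather than giving a separate proof.
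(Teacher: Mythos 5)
Your proof is correct and follows exactly the route the paper intends: the corollary carries a \verb|\qed| in its statement precisely because it is the specialization $\bp_0 = (p)$, $\bp_1 = (p-1,1)$, $\bq_1 = \bq_0$ of the reducibility criterion already assembled in Subsection~\ref{sub strat} from Proposition~\ref{prop strat}~\eqref{item strat2}, equation~\eqref{eq diff}, and Lemma~\ref{Lem orbits}. Your two points of care are also the right ones, and both check out (non-maximality of $(p-1,1)$ is immediate since the maximal partition of $d_0 = p \leq m_0$ is unique, namely $(p)$ itself).
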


We explain now how to calculate $c_{\bp, \bq}$. Observe that $c_{\bp, \bq}$ is the rank of the system of linear equations determined by the conditions
\[
\rho_i (J_\bp, J_\bq, M_1, \ldots, M_h) = 0, \qquad 1 \leq i \leq k,
\]
with $(\rho_1, \ldots, \rho_k) = \cJ$ and $M_1$, \ldots, $M_h$ are matrices of indeterminates. If $\rho$ is a relation, then we denote by $c_{\bp, \bq} (\rho)$ the rank of the system of linear equations determined by the condition
\[
\rho (J_\bp, J_\bq, M_1, \ldots, M_h) = 0.
\]
As a first observation we get the following, which is obvious in the light of the preceding remarks.

\begin{Lem} \label{lem c part}
Assume $\cI = (\rho_1, \ldots, \rho_k)$ and there exists a partition $I_1$, \ldots, $I_k$ of $\{ 1, \ldots, h \}$ such that, for each $1 \leq i \leq k$,
\[
\rho_i = \sum_{j \in I_i} \sum_{\substack{0 \leq a < m_0 \\ 0 \leq b < m_1 \\ a + b \geq 1}} \lambda^{i, j}_{a, b} \vep_0^a \alpha_j \vep_1^b,
\]
i.e., different relations $\rho_i$ involve different arrows $\alpha_j$. Then
\[
c_{\bp, \bq} = \sum_{i = 1}^k c_{\bp, \bq} (\rho_i).
\]
\end{Lem}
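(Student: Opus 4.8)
The plan is to interpret $c_{\bp, \bq}$ and each $c_{\bp, \bq}(\rho_i)$ as the ranks of linear systems in one common set of unknowns, and then to exhibit a block decomposition of the combined system. First I would fix the $h$ matrices $M_1, \ldots, M_h \in \Hom_K(K^{d_1}, K^{d_0})$ of indeterminates, so that the full collection of scalar unknowns is the disjoint union of the entry-sets $V_j$ of the individual matrices $M_j$, $1 \leq j \leq h$. By definition $c_{\bp, \bq}$ is the rank of the linear system obtained by writing out, entry by entry, all the matrix equations $\rho_i(J_\bp, J_\bq, M_1, \ldots, M_h) = 0$ for $1 \leq i \leq k$, whereas $c_{\bp, \bq}(\rho_i)$ is the rank of the subsystem coming from $\rho_i$ alone.

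The key observation is that the coefficient matrix of the combined system is block diagonal. Indeed, since $\rho_i = \sum_{j \in I_i} \sum_{a, b} \lambda^{i, j}_{a, b} \vep_0^a \alpha_j \vep_1^b$, we have $\rho_i(J_\bp, J_\bq, M_1, \ldots, M_h) = \sum_{j \in I_i} \sum_{a, b} \lambda^{i, j}_{a, b} J_\bp^a M_j J_\bq^b$, which depends only on those $M_j$ with $j \in I_i$. Hence every scalar equation arising from $\rho_i$ involves only the variables in $W_i := \bigcup_{j \in I_i} V_j$. Because $I_1, \ldots, I_k$ is a partition of $\{ 1, \ldots, h \}$, the sets $W_1, \ldots, W_k$ partition the total variable set. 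Ordering the rows of the coefficient matrix by the index $i$ and the columns according to the blocks $W_1, \ldots, W_k$ therefore produces a matrix whose off-diagonal blocks vanish, since the rows coming from $\rho_i$ have support contained in the columns indexed by $W_i$.

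Finally I would invoke the elementary fact that the rank of a block-diagonal matrix equals the sum of the ranks of its diagonal blocks. The $i$-th diagonal block is precisely the coefficient matrix of the single-relation system for $\rho_i$, whose rank is $c_{\bp, \bq}(\rho_i)$; summing over $i$ yields $c_{\bp, \bq} = \sum_{i = 1}^k c_{\bp, \bq}(\rho_i)$. The only point that requires any care, and it is the sole genuine if minor step, is the verification that disjointness of the arrow index sets $I_i$ translates into disjointness of the variable sets $W_i$, which is what forces the off-diagonal blocks to vanish; everything else is the standard additivity of rank across independent blocks.
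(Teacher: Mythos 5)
Your argument is correct and is exactly the reasoning the paper has in mind: the paper gives no explicit proof, declaring the lemma ``obvious in the light of the preceding remarks,'' and those remarks amount precisely to your observation that $c_{\bp,\bq}$ is the rank of a linear system whose coefficient matrix becomes block diagonal under the partition of the variables into the entry-sets of the $M_j$ with $j \in I_i$, so rank additivity applies. You have simply written out carefully what the authors left implicit.
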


Observe that $c_{\bp, \bq}$ behaves additively, i.e., if $\bp = (p_1, \ldots, p_l)$ and $\bq = (q_1, \ldots, q_k)$, then
\[
c_{\bp, \bq} = \sum_{i = 1}^l \sum_{j = 1}^k c_{(p_i), (q_j)}.
\]
Consequently it is sufficient to study partitions consisting of only one part. We have the following rules, where by a linear combination of arrows we mean a linear combination of arrows in $Q (h)$ starting at $1$ and ending at $0$.

\begin{Prop} \label{prop c formulas}
Let $1 \leq p \leq m_0$, $1 \leq q \leq m_1$, and $\alpha$ be a nonzero linear combinations of arrows.
\begin{enumerate}

\item \label{item c formula2}
If $l \leq p$, then
\[
c_{(p), (1)} (\vep_0^l \alpha) = p - l.
\]

\item \label{item c formula3}
If $m_1 \geq 2$ and $1 \leq l < p$, then
\[
c_{(p), (2)} (\vep_0^l \alpha + \vep_0^{l - 1} \alpha \vep_1) = 2 (p - l).
\]

\item \label{item c formula4}
If $q \leq p$, then
\[
c_{(p), (q)} (\vep_0 \alpha + \alpha \vep_1) = q (p - 1).
\]

\item
If $q \leq p$ and $\beta$ is a linear combination of arrows such that $\alpha$ and $\beta$ are linearly independent, then
\[
c_{(p), (q)} (\vep_0 \alpha + \beta \vep_1) = p q - 1.
\]

\item
If $2 \leq q \leq p$ and $\beta$ is a linear combination of arrows such that $\alpha$ and $\beta$ are linearly independent, then
\[
c_{(p), (q)} (\vep_0 \alpha + \alpha \vep_1 + \beta \vep_1^{q - 1}) = q (p - 1) + 1.
\]

\item
If $l \leq p, q$, then
\[
c_{(p), (q)} \Bigl( \sum_{i = q - l}^{q - 1} \vep_0^{p + q - l - i - 1} \alpha \vep_1^i \Bigr) = l.
\]

\item
If $p, q \geq 2$, $\gamma$ is a linear combination of arrows and $\lambda \in K$, then
\[
c_{(p), (q)} (\vep_0^{p - 1} \alpha \vep_1^{q - 2} + \lambda \vep_0^{p - 2} \alpha \vep_1^{q - 1} + \vep_0^{p - 1} \gamma \vep_1^{q - 1}) = 2.
\]

\item
If $p, q \geq 2$ and $\beta$ and $\gamma$ are linear combinations of arrows such that $\alpha$ and $\beta$ are linearly independent, then
\[
c_{(p), (q)} (\vep_0^{p - 1} \alpha \vep_1^{q - 2} + \vep_0^{p - 2} \beta \vep_1^{q - 1} + \vep_0^{p - 1} \gamma \vep_1^{q - 1}) = 3.
\]

\item
If $p, q \geq 3$, $\gamma'$ and $\gamma''$ are linear combinations of arrows and $\lambda \neq 1$, then
\[
c_{(p), (q)} (\vep_0^{p - 1} \alpha \vep_1^{q - 3} + \vep_0^{p - 2} \alpha \vep_1^{q - 2} + \lambda \vep_0^{p - 3} \alpha \vep_1^{q - 1} + \vep_0^{p - 2} \gamma' \vep_1^{q - 1} + \vep_0^{p - 1} \gamma'' \vep_1^{q - 1}) = 4.
\]


\item
If $3 \leq l \leq p, q$ and $\beta$ and $\gamma$ are linear combinations of arrows such that $\alpha$ and $\beta$ are linearly independent, then
\[
c_{(p), (q)} \Bigl( \sum_{i = q - l}^{q - 1} \vep_0^{p + q - l - i - 1} \alpha \vep_1^i  + \vep_0^{p - 2} \beta \vep_1^{q - 1} + \vep_0^{p - 1} \gamma \vep_1^{q - 1} \Bigr) = l + 1.
\]

\item
If $4 \leq l \leq p, q$, $\gamma'$ and $\gamma''$ are linear combinations of arrows and $\lambda \neq 0$, then
\[
c_{(p), (q)} \Bigl( \sum_{i = q - l}^{q - 1} \vep_0^{p + q - l - i - 1} \alpha \vep_1^i  + \lambda \vep_0^{p - 3} \alpha \vep_1^{q - 1} + \vep_0^{p - 2} \gamma' \vep_1^{q - 1} + \vep_0^{p - 1} \gamma'' \vep_1^{q - 1} \Bigr) = l + 1.
\]

\end{enumerate}
\end{Prop}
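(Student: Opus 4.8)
The plan is to turn every rank $c_{(p),(q)}(\rho)$ into the rank of an explicit system of linear equations indexed by the entries of the product matrix, and then to count independent equations by exploiting the shift structure of the Jordan blocks $J_p$ and $J_q$. Since $c_{\bp,\bq}$ is additive in the parts, I work throughout with one-part partitions $\bp=(p)$, $\bq=(q)$, so that $M(\vep_0)=J_p$ and $M(\vep_1)=J_q$, and each arrow becomes a $p\times q$ matrix of indeterminates. Writing $N_\alpha:=\sum_j c_j M_j$ for $\alpha=\sum_j c_j\alpha_j$, and likewise $N_\beta,N_\gamma$, I first note that the rank is unchanged under an invertible linear substitution of the matrices $M_1,\dots,M_h$; hence I may pick a basis of the arrow space making the independent combinations among $\alpha,\beta,\gamma$ into distinct generic coordinate matrices, while arrows not occurring in $\rho$ contribute variables absent from every equation and so do not affect the rank. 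Thus each statement reduces to a question about one, two, or three generic $p\times q$ matrices.

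The computational engine is the observation that $J_p^a$ shifts rows up by $a$ and $J_q^b$ shifts columns right by $b$, so the monomial $\vep_0^a\alpha\vep_1^b$ sends $N_\alpha$ to the matrix with $(s,t)$-entry $(N_\alpha)_{s+a,\,t-b}$ when $s+a\le p$ and $t-b\ge 1$, and $0$ otherwise. Consequently the $(s,t)$-entry of $\rho(J_p,J_q,M_1,\dots,M_h)$ is an explicit linear form, and $c_{(p),(q)}(\rho)$ is the rank of the resulting $pq$ equations. The variables occurring in the equation at $(s,t)$ lie, for the monomials of a fixed total degree $a+b$, on the diagonal $\{(u,v):u-v=(s-t)+(a+b)\}$; in particular, a relation all of whose monomials have equal total degree gives a system that block-diagonalizes along these diagonals. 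The standard way to read off the rank of each block is to exhibit, for every nontrivial equation, a distinct entry that it ``introduces'', so that the equations are triangular with respect to these pivots.

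I would then dispatch the statements in increasing order of difficulty. Parts (1)--(4) are the homogeneous cases: (1) and (2) follow immediately by listing the surviving equations and their pivots; (3) becomes the Sylvester condition $J_pN_\alpha+N_\alpha J_q=0$, whose solution space has dimension $\min\{p,q\}=q$ — conjugating $J_q$ to $-J_q$ reduces its kernel to the $\Hom$-count behind~\eqref{eq Hom} — giving rank $pq-q=q(p-1)$; and (4) is the analogous two-matrix condition $J_pN_\alpha+N_\beta J_q=0$, settled by the pivot count to rank $pq-1$. Parts (5) and (6) run on the same template with one extra boundary monomial, and (10)--(11) are their long-diagonal analogues: the sum $\sum_i\vep_0^{\cdots}\alpha\vep_1^i$ contributes $l$ equations along a single diagonal, and the correction monomials $\vep_0^{p-2}\beta\vep_1^{q-1},\,\vep_0^{p-1}\gamma\vep_1^{q-1}$ (respectively $\lambda\vep_0^{p-3}\alpha\vep_1^{q-1}$) add exactly one further independent equation — this is precisely where the independence of $\alpha$ and $\beta$, respectively the condition $\lambda\ne 0$, is used.

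The corner statements (7)--(9) are where I expect the real work. Because the monomials carry the top powers $\vep_0^{p-1},\vep_1^{q-1}$ and their near-neighbours, only a bounded set of positions $(s,t)$ near the corner $(1,q)$ yields nontrivial equations, so I would simply enumerate them; for (7) the positions $(1,q-1),(1,q),(2,q)$ give $N_{p,1}=0$, $N_{p,2}+\lambda N_{p-1,1}+R_{p,1}=0$ and $\lambda N_{p,1}=0$, of rank $2$ regardless of $\lambda$ and of $\gamma$. The delicate and, I expect, hardest point is that in (8) and especially (9) two of these corner equations coincide unless a genericity hypothesis intervenes: the independence of $\alpha$ and $\beta$ in (8), and the condition $\lambda\ne 1$ in (9), are exactly what keep an otherwise redundant equation independent and push the rank up to the asserted $3$, respectively $4$. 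I would therefore carry out the corner enumeration carefully, track which entry each equation introduces, and verify in every case that the genericity assumption makes the chosen pivots distinct, so that the rank is neither over- nor under-counted.
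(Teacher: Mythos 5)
Your proposal is correct and follows essentially the same method as the paper: evaluate $\rho(J_{(p)},J_{(q)},M_1,\dots,M_h)$ entry by entry using the shift action of the Jordan blocks, and count independent linear forms by exhibiting distinct pivots (the paper carries this out only for part (11), declaring the rest similar, and your worked cases — including the precise role of the hypotheses $\lambda\neq 1$, $\lambda\neq 0$ and the linear independence of $\alpha$ and $\beta$ — check out). The only genuinely different touches, the Sylvester-operator argument for part (3) and the explicit block-diagonalization along diagonals, are minor streamlinings of the same computation.
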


\begin{proof}
We only prove the last formula, which seems to be the most complicated one. The remaining ones are proved similarly. If $M_1, \ldots, M_h \in \Hom_K (K^q, K^p)$, then we write $M (\alpha)$ in a matrix form
\[
M (\alpha) =
\begin{bmatrix}
M_{p, 1} & \cdots & M_{p, q}
\\
\vdots & & \vdots
\\
M_{1, 1} & \cdots & M_{1, q}
\end{bmatrix}
\]
with unusual indexing. Similarly, we write
\[
M (\gamma') =
\begin{bmatrix}
M_{p, 1}' & \cdots & M_{p, q}'
\\
\vdots & & \vdots
\\
M_{1, 1}' & \cdots & M_{1, q}'
\end{bmatrix}
\qquad \text{and} \qquad
M (\gamma'') =
\begin{bmatrix}
M_{p, 1}'' & \cdots & M_{p, q}''
\\
\vdots & & \vdots
\\
M_{1, 1}'' & \cdots & M_{1, q}''
\end{bmatrix}.
\]
If $\rho := \sum_{i = q - l}^{q - 1} \vep_0^{p + q - l - i - 1} \alpha \vep_1^i  + \lambda \vep_0^{p - 3} \alpha \vep_1^{q - 1} + \vep_0^{p - 2} \gamma' \vep_1^{q - 1} + \vep_0^{p - 1} \gamma'' \vep_1^{q - 1}$, then
\[
\rho (J_{(p)}, J_{(q)}, M_1, \ldots, M_h) =
\begin{bmatrix}
0 & \cdots & 0 & x_2 & \cdots & x_{l - 2} & x_{l - 1} & x_l & x_{l + 1}
\\
0 & \cdots & \cdots & 0 & x_2 & \cdots & x_{l - 2} & x_{l - 1} & x_l'
\\
0 & \cdots & \cdots & \cdots & 0 & x_2 & \cdots & x_{l - 2} & x_{l - 1}'
\\
0 & \cdots & \cdots & \cdots & \cdots & 0 & x_2 & \cdots & x_{l - 2}
\\
\vdots & & & & & & \ddots & \ddots & \vdots
\\
\vdots & & & & & & & \ddots & x_2
\\
\vdots & & & & & & & & 0
\\
\vdots & & & & & & & & \vdots
\\
0 & \cdots & \cdots & \cdots & \cdots & \cdots & \cdots & \cdots & 0
\end{bmatrix},
\]
where
\begin{gather*}
x_i := \sum_{j = 1}^{l - 1} M_{j, l - j}, \qquad 2 \leq i \leq l,
\\
x_{l - 1}' := x_{l - 1} + \lambda x_2, \qquad x_l' := x_l + \lambda M_{2, 1} + M_{1, 1}'
\\
\intertext{and}
x_{l + 1} := x_l + \lambda M_{3, 1} + M_{2, 1}' + M_{3, 1}''.
\end{gather*}
Now one sees that $x_2$, \ldots, $x_{l + 1}$ and $x_l'$ are linearly independent.
\end{proof}

\begin{Rem}
The formulas~\eqref{item c formula2}, \eqref{item c formula3} and~\eqref{item c formula4} can be unified and generalized to the following formula: if $l < p$ and  $q \leq p$, then
\[
c_{(p), (q)} \Bigl( \sum_{i = 0}^{\min \{ l, q - 1 \}} \vep_0^{l - i} \alpha \vep_1^i \Bigr) = q (p - l).
\]
However in this case the analysis is more complicated than the one presented above. Moreover, we do not need this statement. As a result we decided not to include this claim in Proposition~\ref{prop c formulas}.
\end{Rem}


\section{Proof of Theorem~\ref{thm:intro3}} \label{sect proof one}


\subsection{Notation} \label{subsect not}
We present some notation we use throughout this section. Let $h \geq 1$ and $m_0, m_1 \geq 1$ be such that $m_0 + m_1 \geq 3$. Put $Q := Q (h)$ and $B := A' (h, m_0, m_1)$. Fix relations $\rho_1$, \ldots, $\rho_k$ in $B$ such that, for each $1 \leq i \leq k$, $s (\rho_i) = 1$ and $t (\rho_i) = 0$. Let $\cJ$ be the ideal generated $\rho_1$, \ldots, $\rho_k$, and let $A := B / \cJ$. Observe that
\[
\cJ = \sum_{i = 1}^k B_0 \rho_i B_1,
\]
and $A_0 = B_0$ and $A_1 = B_1$ (we again remind that $A_i := e_i A e_i$ and $B_i := e_i B e_i$, for $i = 0, 1$). We assume that $k$ is minimal in the sense that, if $\rho_1'$, \ldots, $\rho_l'$ are relations such that
\[
\sum_{j = 1}^l B_0 \rho_j' B_1 = \cJ,
\]
then $l \geq k$. For each $1 \leq i \leq k$, write
\[
\rho_i = \sum_{j = 1}^h \sum_{\substack{0 \leq a < m_0 \\ 0 \leq b < m_1 \\ a + b \geq 1}} \lambda^{i, j}_{a, b} \vep_0^a \alpha_j \vep_1^b,
\]
for some $\lambda^{i, j}_{a, b} \in K$.

Our aim is to prove that if $\cJ \neq 0$ and $A$ is geometrically irreducible, then we may assume $m_0 = m_1$, $k = 1$, and either $\rho_1 = \rho^{(1)}$ or $\rho_1 = \rho^{(m_0 - 1)}$, which will finish the proof of Theorem~\ref{thm:intro3}.

\subsection{Each relation contains monomials of degree $(a, 0)$ and $(0, b)$}
Before we can proceed, we need some module theoretic considerations.

\begin{Lem}\label{lem:a0prep}
Let $\Lambda$ be a $K$-algebra, and let $N_1 \subseteq N \subseteq M$ be $\Lambda$-modules such that the following hold:
\begin{enumerate}

\item
$N$ is a direct summand of $M$;

\item
$N_1 + \rad (M) = N + \rad (M)$.

\end{enumerate}
Then $N_1 = N$.
\end{Lem}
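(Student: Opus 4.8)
The plan is to reduce the statement to Nakayama's lemma. Since $N_1 \subseteq N$, the inclusion $N_1 + \rad (M) \subseteq N + \rad (M)$ is automatic, so the content of hypothesis~(2) is really the reverse inclusion. First I would intersect the equality $N_1 + \rad (M) = N + \rad (M)$ with $N$. Because $N_1 \subseteq N$, the modular law yields
\[
N = N \cap (N + \rad (M)) = N \cap (N_1 + \rad (M)) = N_1 + (N \cap \rad (M)).
\]
Thus everything comes down to identifying the submodule $N \cap \rad (M)$.

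This is where hypothesis~(1) enters, and it is the only genuine step. Writing $M = N \oplus N'$ for a complementary submodule $N'$, I would use that the radical respects this decomposition, $\rad (M) = \rad (\Lambda) M = \rad (\Lambda) N \oplus \rad (\Lambda) N' = \rad (N) \oplus \rad (N')$. An element of $N \cap \rad (M)$ then has its $N'$-component equal to zero, hence lies in $\rad (N)$; the reverse inclusion $\rad (N) \subseteq N \cap \rad (M)$ is clear. Therefore $N \cap \rad (M) = \rad (N) = \rad (\Lambda) N$.

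Combining the two steps gives $N = N_1 + \rad (\Lambda) N$ with $N_1 \subseteq N$. Since $\Lambda$ is finite-dimensional, $\rad (\Lambda)$ is nilpotent, so passing to the quotient $N / N_1$ we find $\rad (\Lambda) \cdot (N / N_1) = N / N_1$, and iterating forces $N / N_1 = 0$, i.e.\ $N_1 = N$; this is precisely Nakayama's lemma. The only subtle point is the identification $N \cap \rad (M) = \rad (N)$: without the direct summand assumption $N \cap \rad (M)$ could be strictly larger than $\rad (N)$, and then Nakayama would no longer apply. So hypothesis~(1) is essential rather than cosmetic, and I expect verifying $\rad (M) = \rad (N) \oplus \rad (N')$ to be the one place where a little care is needed, though it is standard.
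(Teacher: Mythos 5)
Your proof is correct and follows essentially the same route as the paper: decompose $M = N \oplus N'$, observe $\rad(M) = \rad(N) \oplus \rad(N')$, deduce $N = N_1 + \rad(N)$, and finish with Nakayama. The only (cosmetic) difference is that you extract this identity by intersecting with $N$ via the modular law, whereas the paper compares the two sums $N + \rad(M)$ and $N_1 + \rad(M)$ directly in terms of the decomposition.
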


\begin{proof}
We have $M = N \oplus N'$ for some submodule $N'$ of $M$. It follows that $\rad (M) = \rad (N) \oplus \rad (N')$, and therefore $N + \rad (M) = N \oplus \rad (N')$ and $N_1 + \rad (M) = (N_1 + \rad (N)) \oplus \rad (N')$. Since $N_1 + \rad (M) = N + \rad (M)$, $N = N_1 + \rad (N)$. This implies $N_1 = N$ by the Nakayama Lemma.
\end{proof}

We prove the following.

\begin{Lem} \label{lem:a0}
If $A$ is geometrically irreducible, then for each $1 \leq i \leq k$, there exist $a$, $j_0$ and $b$, $j_1$ such that $\lambda^{i, j_0}_{a, 0} \neq 0$ and $\lambda^{i, j_1}_{0, b} \neq 0$.
\end{Lem}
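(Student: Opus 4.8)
The plan is to argue by contradiction and exhibit a redundant relation, contradicting the minimality of $k$. By symmetry between the two vertices it suffices to establish the first claim (existence of a degree $(a,0)$ monomial); the second is obtained by interchanging the roles of $\vep_0$ and $\vep_1$. So suppose that for some index $i_0$ the relation $\rho_{i_0}$ contains no monomial of degree $(a,0)$, i.e.\ $\lambda^{i_0, j}_{a, 0} = 0$ for all $a$ and $j$. Equivalently, every monomial of $\rho_{i_0}$ is divisible by $\vep_1$ on the right.

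I would set $V := e_0 B e_1$ and regard it as a module over the commutative ring $A_1 = B_1 = K[\vep_1]/(\vep_1^{m_1})$ via right multiplication by $\vep_1$; it is free, with the paths $\vep_0^a \alpha_j$ ($0 \le a < m_0$, $1 \le j \le h$) as a basis. Inside $V$ sits $W := \cJ = \sum_i B_0 \rho_i B_1$ (note $\cJ \subseteq e_0 B e_1$ since all relations run from $1$ to $0$), and $e_0 A e_1 = V/W$. The first key step is geometric: since $A$ is geometrically irreducible, Proposition~\ref{prop known}(3) gives that the injective $I_0$ is locally free, so $I_0(1) = D(e_0 A e_1)$ is free over $A_1$; as $A_1$ is self-injective, a finite $A_1$-module is free iff its $K$-dual is, whence $V/W$ is free over $A_1$. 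A free, hence projective, quotient makes the sequence $0 \to W \to V \to V/W \to 0$ split, so $W$ is a direct $A_1$-summand of $V$. This is exactly the summand hypothesis of Lemma~\ref{lem:a0prep}.

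Next I would compute tops. Passing to $V/\rad(V) = V/V\vep_1$, every monomial of each $\rho_i$ that is divisible by $\vep_1$ dies, so the image of $W$ in the top is the $A_0$-span of the degree $(a,0)$ parts of the $\rho_i$; by the standing assumption $\rho_{i_0}$ contributes nothing. Hence, taking $M := V$, $N := W$, and $N_1 := \sum_{i \ne i_0} B_0 \rho_i B_1$, one has $N_1 \subseteq N$ and $N_1 + \rad(M) = N + \rad(M)$. Lemma~\ref{lem:a0prep} then yields $N_1 = N = \cJ$, so the $k - 1$ relations $\{\rho_i : i \ne i_0\}$ already generate $\cJ$, contradicting the minimality of $k$. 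The dual claim runs identically, using local freeness of the projective $P_1$ to split $W$ off $V$ as an $A_0$-module and taking tops modulo $\vep_0 V$.

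The routine parts are the top computation and the minimality bookkeeping; the crux, and the step I would present most carefully, is the passage from the geometric hypothesis to the splitting. I need to make precise that local freeness of the injective $I_0$ (a statement about a left module) translates into freeness of $e_0 A e_1$ as a right $A_1$-module, and hence into $W$ being an $A_1$-direct summand of $V$, so that Lemma~\ref{lem:a0prep} can be invoked. I would also check the degenerate cases $m_0 = 1$ or $m_1 = 1$: there the top image of $W$ vanishes, the argument degenerates to $N_1 = 0$, and one is forced to $\cJ = 0$ (so $k = 0$ and the statement holds vacuously), which I would confirm is consistent with the setup $m_0 + m_1 \ge 3$.
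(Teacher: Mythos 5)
Your proposal is correct and follows essentially the same route as the paper: the paper spells out the $(0,b)$ half using local freeness of the projective $Ae_1$ (so $e_0Ae_1$ is free over $B_0$, giving the splitting) and the left $B_0$-module structure with radical $\vep_0 M$, then handles the $(a,0)$ half ``dually'' --- which is precisely the half you write out in detail via the injective $I_0$ and the right $A_1$-structure. The key lemma (Lemma~\ref{lem:a0prep}), the top/radical computation, and the appeal to minimality of $k$ all match; your only additional wrinkle, correctly handled, is the $K$-duality step needed to pass from freeness of $I_0(1)=D(e_0Ae_1)$ to freeness of $e_0Ae_1$ over the self-injective algebra $A_1$.
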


\begin{proof}
Let $\Lambda := B_0$, $M := e_0 B e_1$ and $N := \cJ$. Note that $N$ is a $\Lambda$-submodule of $M$. Moreover, $N' := M / N = e_0 A e_1$. Since $A$ is geometrically irreducible, we know from Proposition~\ref{prop known}~(3) that $N'$ is a free $\Lambda$-module. It follows that $N$ is a direct summand of $M$.

Let
\begin{gather*}
I := \{ 1 \leq i \leq k \mid \text{there exist $b$ and $j$ such that $\lambda^{i, j}_{0, b} \neq 0$} \}
\\
\intertext{and}
N_1 := \sum_{i \in I} B_0 \rho_i B_1.
\end{gather*}
We claim that
\[
N_1 + \rad_\Lambda (M) = N + \rad_\Lambda (M).
\]
Obviously, we have $N_1 + \rad_\Lambda (M) \subseteq N + \rad_\Lambda (M)$. For the converse inclusion, observe that $\rho_i \in \rad_\Lambda (M)$, for each $i \notin I$. Consequently, $\rho_i \in N_1 + \rad_\Lambda (M)$, for each $1 \leq i \leq k$, hence $N \subseteq N_1 + \rad_\Lambda (M)$. Obviously, $\rad_\Lambda (M) \subseteq N_1 + \rad_\Lambda (M)$, hence the equality follows.

Now we apply Lemma~\ref{lem:a0prep} and get $N_1 = N$. Then minimality of $k$ implies that $I = \{ 1, \ldots, k \}$, i.e., for each $1 \leq i \leq k$ there exist $b$ and $j$ such that $\lambda^{i, j}_{0, b} \neq 0$. Dually we prove that for each $1 \leq i \leq k$ there exist $a$ and $j$ such that $\lambda^{i, j}_{a, 0} \neq 0$.
\end{proof}

\subsection{Reduction to one relation}
The aim of this section is to prove the following result.

\begin{Lem}\label{lem:1relation}
If $A$ is geometrically irreducible, then $k = 1$.
\end{Lem}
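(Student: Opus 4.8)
The plan is to argue by contradiction. Assuming $k \geq 2$, I will produce a dimension vector $\bd$ together with a non-maximal pair of partitions whose stratum is at least as large as the maximal one; by Proposition~\ref{prop strat}~\eqref{item strat2} and~\eqref{eq diff} this forces $\rep(A,\bd)$ to be reducible, contradicting geometric irreducibility. Concretely, everything is organized around Corollary~\ref{cor irr}: I will look for a maximal pair $((p),\bq)$ with $p \geq 2$ for which the codimension jump $c_{(p),\bq} - c_{(p-1,1),\bq}$ is at least $2$. Since $m_0 + m_1 \geq 3$, at least one of $m_0, m_1$ is $\geq 2$; after possibly dualizing ($\vep_0 \leftrightarrow \vep_1$, $0 \leftrightarrow 1$) I may assume $m_0 \geq 2$, so that partitions $(p)$ with $p \geq 2$ at the vertex $0$ are available.

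By the additivity of $c_{\bp,\bq}$ in the parts of $\bp$ and $\bq$ together with $c_{(1),(1)} = 0$, the jump decomposes as $c_{(p),\bq} - c_{(p-1,1),\bq} = \sum_j \delta(p,q_j)$, where $\delta(p,q) := c_{(p),(q)} - c_{(p-1),(q)} - c_{(1),(q)}$ and $\bq = (q_1, \ldots, q_r)$. Thus it suffices to choose a maximal $\bq$ and collect single-part contributions $\delta(p,q_j)$ summing to at least $2$. The formulas in Proposition~\ref{prop c formulas} indicate that for a \emph{single} relation each $\delta(p,q)$ is at most $1$ (indeed $\delta(p,p)=0$), which is exactly why $A'(h,m_0,m_1)$ and the algebras $A(h,m,m,1)$ stay irreducible; the mechanism I want to exploit is that a second, independent relation should force an additional part to contribute, pushing the total sum to $2$.

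To make two relations contribute independently I would first normalize $\rho_1, \ldots, \rho_k$ using minimality of $k$ together with Lemma~\ref{lem:a0}. Minimality says the classes of the $\rho_i$ are linearly independent in $\cJ/(\vep_0\cJ + \cJ\vep_1)$, so after left multiplication by units of $B_0$, right multiplication by units of $B_1$, and $K$-linear combinations I may assume the relations have distinct leading corners $\vep_0^{a_i}\alpha_{j_i}\vep_1^{b_i}$, while by Lemma~\ref{lem:a0} each still carries a nonzero pure-$\vep_0$ and a nonzero pure-$\vep_1$ monomial. I would then pick $\bd = (p,d_1)$ with $p$ (and the relevant part $q$ of a maximal $\bq$) large enough that these leading monomials are active, and translate $c_{(p),(q)}$ into the rank of the linear map $V^h \to V^k$, $V := \Hom_K(K^q, K^p)$, whose $(i,j)$-block is the two-variable polynomial attached to $\rho_i$ evaluated at $J_{(p)}$ and $J_{(q)}$, aiming to show this rank jumps by at least $2$ relative to its values at $(p-1)$ and at $(1)$.

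The main obstacle is that the rank of the combined system is neither additive nor usably sub-/super-additive across the relations: if the pure-$\vep_0$ parts, and likewise the pure-$\vep_1$ parts, of two relations are proportional as polynomials, then on the $\bq=(1)$ slice alone the second relation adds nothing to the jump, so a one-dimensional analysis cannot detect it. Seeing the second relation genuinely requires the mixed $\vep_0^a\alpha\vep_1^b$ terms and a two-dimensional count. The way I expect to resolve this is to use the distinct-corner normalization to choose $(p,q)$ so that the two relations impose conditions on disjoint blocks of the entries of $M_1,\ldots,M_h$; then $c$ splits as a sum over the relations in the spirit of Lemma~\ref{lem c part}, each summand contributes a jump of at least $1$ via the appropriate formula of Proposition~\ref{prop c formulas}, and the total reaches $2$. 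Checking that such a sufficiently independent configuration can always be reached from $k \geq 2$ minimal relations—in particular in the degenerate case where all pure parts are aligned—is the technical heart; once it is in place, Corollary~\ref{cor irr} delivers the contradiction, forcing $k = 1$.
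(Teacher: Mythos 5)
Your framework is the right one (Corollary~\ref{cor irr} plus additivity of $c_{\bp,\bq}$ over parts and, via Lemma~\ref{lem c part}, over relations involving disjoint arrows), and you correctly locate the crux: forcing two relations to contribute independently to the codimension jump. But the proposal stops exactly there --- you defer ``the degenerate case where all pure parts are aligned'' as ``the technical heart'' --- and that case is precisely where the whole content lies; moreover it is not a case to be handled but one to be \emph{excluded}. The paper's normalization does this: let $a_1$ be minimal with $\lambda^{i,j}_{a,0} \neq 0$; after the arrow substitution $\alpha_1 \leftarrow \sum_{j,a} \lambda^{1,j}_{a,0}\vep_0^{a-a_1}\alpha_j$ (a base change in the arrow space, which is not among the operations you list --- left/right multiplication by units of $B_0$, $B_1$ and $K$-linear combinations do not suffice), one gets $\rho_1 = \vep_0^{a_1}\alpha_1 + (\text{terms ending in }\vep_1)$. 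Subtracting $B_0$-multiples of $\rho_1$ from $\rho_2, \ldots, \rho_k$ kills every occurrence of $\alpha_1$ in their pure-$\vep_0$ parts; the resulting set still generates $\cJ$ with the minimal number $k$ of generators, so Lemma~\ref{lem:a0} applies to it and forces $\rho_2$ to retain a nonzero pure-$\vep_0$ monomial, necessarily on a \emph{different} arrow. Iterating yields $\rho_i = \vep_0^{a_i}\alpha_i + (\text{terms ending in }\vep_1)$ with distinct arrows $\alpha_1, \ldots, \alpha_k$ and $0 < a_1 \leq \cdots \leq a_k$. So alignment of the pure parts cannot survive a minimal generating set.

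Your second guiding idea --- that ``seeing the second relation genuinely requires the mixed $\vep_0^a\alpha\vep_1^b$ terms and a two-dimensional count'' --- is the opposite of what works. The paper takes $\bd = (a_2+1,1)$: with $d_1 = 1$ the nilpotent loop $\vep_1$ acts as zero, every mixed term dies, and $\rep(A,\bd)$ is cut out by the monomial relations $\vep_0^{a_i}\alpha_i$ on distinct arrows, so Lemma~\ref{lem c part} applies verbatim and Proposition~\ref{prop c formulas}~(1) gives $c_{(a_2+1),(1)} \geq (a_2+1-a_1) + 1$ while $c_{(a_2,1),(1)} = a_2 - a_1$; Corollary~\ref{cor irr} then yields the contradiction. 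Without the distinct-arrow normalization and the choice $d_1 = 1$, your plan does not close.
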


\begin{proof}
Assume $k > 1$. Let $a_1$ be the minimal $a$ such that $\lambda^{i, j}_{a, 0} \neq 0$ for some $i, j \geq 1$. Lemma~\ref{lem:a0} ensures that such an $a_1$ exists. Without loss of generality we assume that $i = 1 = j$.

Thus $\rho_1$ is of the form
\[
\rho_1 = \vep_0^{a_1} \sum_{j = 1}^h \sum_{a_1 \leq a < m_0} \lambda^{1, j}_{a, 0} \vep_0^{a - a_1} \alpha_j + \sum_{j = 1}^h \sum_{\substack{0 \leq a < m_0 \\ 1 \leq b < m_1}} \lambda^{1, j}_{a, b} \vep_0^a \alpha_j \vep_1^b.
\]
Rescaling by
\[
\alpha_1 \leftarrow \sum_{j = 1}^h \sum_{a_1 \leq a < m_0} \lambda^{1, j}_{a, 0} \vep_0^{a - a_1} \alpha_j
\]
we can assume that $\rho_1$ is of the form
\[
\rho_1 = \vep_0^{a_1} \alpha_1  + \sum_{j = 1}^h \sum_{\substack{0 \leq a < m_0 \\ 1 \leq b < m_1}} \lambda^{1, j}_{a, b} \vep_0^a \alpha_j \vep_1^b
\]
(in the paper we use the following notation: $\alpha \leftarrow \omega$ means that $\omega$ becomes the arrow $\alpha$, while the remaining arrows do not change; note that the coefficients $\lambda_{a, b}^{1, j}$ may change). Moreover, for $2 \leq i \le k$, we can now assume that $\lambda_{a, 0}^{i, 1} = 0$ for any $a$. (This can be achieved by subtracting appropriate linear combinations of $\vep_0^{a - a_1} \rho_1$ from $\rho_i$.) On the other hand, if $\lambda_{a, 0}^{i, j} \neq 0$, for some $i, j \geq 2$ and $a$, then $a \geq a_1$.

Next, let $a_2$ be the minimal $a$ such that $\lambda^{i, j}_{a,0} \neq 0$ for some $i, j \geq 2$. Again existence of $a_2$ (in particular, that $h \geq 2$) follows from Lemma~\ref{lem:a0}. Observe that $a_2 \geq a_1$. Without loss of generality we assume that $i = 2 = j$. After the arrow transformation
\[
\alpha_2 \leftarrow \sum_{j = 2}^h \sum_{a_0 \leq a < m_0} \lambda^{2, j}_{a,0} \vep_0^{a - a_2} \alpha_j
\]
we can obtain that $\rho_1$ and $\rho_2$ are of the form
\begin{gather*}
\rho_1 = \vep_0^{a_1} \alpha_1  + \sum_{j = 1}^h \sum_{\substack{0 \leq a < m_0 \\ 1 \leq b < m_1}} \lambda^{1, j}_{a, b} \vep_0^a \alpha_j \vep_1^b.
\\
\intertext{and}
\rho_2 = \vep_0^{a_2} \alpha_1  + \sum_{j = 1}^h \sum_{\substack{0 \leq a < m_0 \\ 1 \leq b < m_1}} \lambda^{2, j}_{a, b} \vep_0^a \alpha_j \vep_1^b.
\end{gather*}
Moreover, for $3 \leq i \le k$, we can again assume that $\lambda_{a, 0}^{i, 1} = 0 = \lambda_{a, 0}^{i, 2}$ for any $a$, while if $\lambda_{a, 0}^{i, j} \neq 0$, for some $i, j \geq 3$ and $a$, then $a \geq a_2$.

Continuing inductively in this way, we can assume that, for each $1 \leq i \leq k$,
\[
\rho_i = \vep_0^{a_i} \alpha_i + \sum_{j = 1}^h \sum_{\substack{0 \leq a < m_0 \\ 1 \leq b < m_1}} \lambda^{i, j}_{a, b} \vep_0^a \alpha_j \vep_1^b,
\]
with $0 < a_1 \leq a_2 \leq \cdots \leq a_k < m_0$. In particular, this means $k \leq h$.

Let $\bd = (a_2 + 1, 1)$. We show that $\rep (A, \bd)$ is reducible. Observe that $\rep (A, \bd)$ is described by the relations
\[
\vep_0^{a_1} \alpha_1, \; \vep_0^{a_2} \alpha_2, \; \cdots, \; \vep_0^{a_k} \alpha_k
\]
(obviously, if $i > 2$ and $a_i > a_2$, then the relation $\vep_0^{a_i} \alpha_i$ is redundant). The maximal partitions in $\cP_{m_0} (a_2 + 1)$ and $\cP_{m_1} (1)$ are $\bp_0 := (a_2 + 1)$ and $\bq_0 := (1)$, respectively. Using Lemma~\ref{lem c part} we get
\[
c_{\bp_0, \bq_0} = \sum_{i = 1}^k c_{\bp_0, \bq_0} (\vep_0^{a_i} \alpha_i).
\]
Applying Proposition~\ref{prop c formulas}~\eqref{item c formula2} for $p = a_2 + 1$, $q = 1$ and $l = a_1$ ($l = a_2$, respectively), we get
\[
c_{\bp_0, \bq_0} (\vep_0^{a_1} \alpha_1) = a_2 + 1 - a_1 \qquad \text{and} \qquad c_{\bp_0, \bq_0} (\vep_0^{a_2} \alpha_2) = 1,
\]
hence
\[
c_{\bp_0, \bq_0} \geq a_2 + 2 - a_1.
\]
Similarly, if we put $\bp_1 := (a_2, 1)$ and $\bq_1 := \bq_0$, then $\cS_{\bp_1, \bq_1}$ is described by the relation $\vep_0^{a_1} \alpha_1$, hence we get
\[
c_{\bp_1, \bq_1} = c_{\bp_1, \bq_1} (\vep_0^{a_1} \alpha_1) = c_{(a_2), (1)} (\vep_0^{a_1} \alpha_1) = a_2 - a_1,
\]
which contradicts irreducibility according to Corollary~\ref{cor irr}.
\end{proof}

\subsection{The one relation case}
Our aim in this subsection is to prove the following result.

\begin{Prop} \label{prop onerel}
If $\cJ \neq 0$ and $A$ is geometrically irreducible, then
\[
A \cong A (h, m_0, m_0, 1) \qquad \text{or} \qquad A \cong A (h, m_0, m_0, m_0 - 1).
\]
In other words, $m_0 = m_1$, $k = 1$, and we may assume either $\rho_1 = \rho^{(1)}$ or $\rho_1 = \rho^{(m_0 - 1)}$.
\end{Prop}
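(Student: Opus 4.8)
The plan is to feed the single relation $\rho := \rho_1$ (we already have $k=1$ by Lemma~\ref{lem:1relation}) into the reducibility criterion of Corollary~\ref{cor irr}, evaluating the codimensions through Proposition~\ref{prop c formulas}, and to squeeze $\rho$ into the stated shape by running the test on many cleverly chosen dimension vectors. Write $\rho = \sum_{a,b} \vep_0^a \gamma_{a,b} \vep_1^b$, where each $\gamma_{a,b}$ is a linear combination of $\alpha_1, \dots, \alpha_h$ and records the bidegree $(a,b)$ of its monomials. As in the proof of Lemma~\ref{lem:1relation}, a rescaling $\alpha_1 \leftarrow \gamma_{\bullet,0}$ lets me assume that the pure-$\vep_0$ part of $\rho$ is a single monomial $\vep_0^{a_1}\alpha_1$ with $a_1$ minimal; by Lemma~\ref{lem:a0} there is also a nonzero pure-$\vep_1$ part, and I let $b_1$ be its minimal degree. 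The goal is to prove $a_1 = b_1 =: n$, that $\rho$ is, up to rescaling $\alpha_1$, exactly $\rho^{(n)}$, that $m_0 = m_1$, and finally that $n \in \{1, m_0-1\}$.

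The key reformulation is that Corollary~\ref{cor irr} applies with \emph{any} maximal partition $\bq = (q_1,\dots,q_s)$ (whose parts are $m_1$, possibly repeated, together with one remainder $r$) while degenerating $\bp = (p) \rightsquigarrow (p-1,1)$. By the additivity of $c_{\bp,\bq}$, the left side of~\eqref{eq diff} becomes $\sum_j \delta(p,q_j)$, where $\delta(p,q) := c_{(p),(q)}(\rho) - c_{(p-1),(q)}(\rho) - c_{(1),(q)}(\rho)$. Taking $\bq = (m_1,m_1)$ therefore shows that irreducibility forces $\delta(p,m_1) \le 0$ for all $2 \le p \le m_0$, and taking $\bq = (m_1,r)$ forces $\delta(p,r) \le 1$; together with the two transpose-dual families (degenerating $\bq$ against $\bp = (m_0,m_0)$, using the duality $c_{(p),(q)}(\sum \vep_0^a\alpha\vep_1^b) = c_{(q),(p)}(\sum \vep_0^b\alpha\vep_1^a)$ to extend the ranges of Proposition~\ref{prop c formulas}), these inequalities are the engine of the whole argument. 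The repeated-part trick is what makes a bare $\delta = 1$ already fatal.

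I would then carry out the structural reductions in three stages, each time exhibiting a violated inequality if the reduction fails. First, I rule out monomials lying off a single anti-diagonal: the corner computations in Proposition~\ref{prop c formulas}~(7)--(9) and formula~\eqref{item c formula3} show that any term with $a+b \ne n$ contributes an extra unit to $\delta(p,m_1)$ at the extreme $(p,q)$, so $\rho$ must be homogeneous of total degree $n$ with $a_1 = b_1 = n$. Second, I force a single arrow: comparing the one-arrow value~\eqref{item c formula4} against the two-arrow values in~(4),~(5),~(8),~(10),~(11), the repeated-part test detects any genuinely independent second arrow direction (this is exactly the computation giving $\delta = 1$ for $\vep_0\alpha+\beta\vep_1$, hence $2\delta = 2$ for $\bq=(m_1,m_1)$), so $\rho = \rho^{(n)}$ up to rescaling $\alpha_1$. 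Third, using the homogeneous value from formula~(6) of Proposition~\ref{prop c formulas} in both degeneration directions, I force $m_0 = m_1$ (an imbalance $m_0 \ne m_1$ makes one of $\delta(m_0,m_1)$ or its dual strictly positive) and confine $n$ to $\{1,m_0-1\}$, the only degrees for which both the $\vep_0$- and the $\vep_1$-degeneration return $\delta \le 0$ uniformly.

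The main obstacle will be Stage three, and more generally the range bookkeeping throughout: which monomials of $\rho$ survive in $\cS_{\bp,\bq}$ depends delicately on $p,q$ relative to $n$, $m_0$, $m_1$, so each formula of Proposition~\ref{prop c formulas} is only valid in a restricted window and must be supplemented by the transpose duality at the boundary cases $p=q=n+1$ and $(p,q)$ near $(m_0,m_1)$. Establishing $m_0 = m_1$ in particular requires playing the two degenerations against each other at these extreme dimension vectors and checking that any strict inequality $m_0 \gtrless m_1$, or any intermediate $n$ with $2 \le n \le m_0 - 2$, produces a codimension drop of at least $2$ in one direction, hence reducibility by Corollary~\ref{cor irr}. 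Keeping the validity ranges, the remainder part $r$, and the transpose symmetry straight is where the bulk of the careful work lies.
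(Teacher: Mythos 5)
Your toolkit is exactly the one the paper uses: Corollary~\ref{cor irr} together with the codimension formulas of Proposition~\ref{prop c formulas}, evaluated on dimension vectors chosen so that the maximal partition has a repeated part (e.g.\ $(m_1,m_1)$ or $(m_0,m_0)$), which doubles a codimension deficit of $1$ into the deficit of $2$ needed for reducibility. That is precisely the engine of Steps~I--IV of the paper's proof, so the strategy is sound. The deferred bookkeeping you flag (which monomials of $\rho$ survive on a given stratum, validity windows of the formulas, transpose duality) is indeed where most of the labour sits, and the paper resolves it by an induction on the $\vep_1$-degree $p$ of the normalized part of $\rho$.

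There is, however, a genuine gap in your Stage one. It is not true that the $\delta$-inequalities force $\rho$ to be homogeneous of total loop-degree $n$: for instance $\vep_0\alpha_1+\alpha_1\vep_1+\alpha_1\vep_1^2$ presents an algebra isomorphic to $A(h,m,m,1)$ (substitute $\vep_1\leftarrow\vep_1+\vep_1^2$), so no sequence of reducibility tests can show that the coefficient of the inhomogeneous term vanishes. In general the coefficients sitting in bidegrees $(n,p)$ and $(n-1,p)$ cannot be killed by Corollary~\ref{cor irr}; they have to be absorbed by changes of presentation $\alpha_1\leftarrow\alpha_1+\omega\vep_1^{p}$ and $\vep_1\leftarrow\vep_1+\lambda\vep_1^{p}$, and only the remaining bidegrees are eliminated by the codimension tests. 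Moreover these substitutions must be interleaved with the tests, one $\vep_1$-degree at a time: each application of Proposition~\ref{prop c formulas} on a stratum $\cS_{\bp,\bq}$ presupposes that the part of $\rho$ of lower $\vep_1$-degree is already in the normal form $\sum_{i<p}\vep_0^{n-i}\alpha_1\vep_1^{i}$, so a one-shot ``kill everything off the antidiagonal'' stage cannot even be set up. Once you reorganize Stage one as such an induction (this is the paper's Step~I, whose by-product $m_0=n+1$ for $n>1$ is also what ultimately confines $n$ to $\{1,m_0-1\}$), the remaining stages line up with the paper's Steps~II--IV, but as written they are statements of intent rather than arguments, and the $n=1$ case still needs its own induction to reach $\rho=\rho^{(1)}$.
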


The proof of Proposition~\ref{prop onerel} will be done in steps. We already know $k = 1$. Hence we may simplify our notation and put $\rho := \rho_1$.
Without loss of generality we may assume $m_0 \geq m_1$. Then in particular $m_0 \geq 2$ (recall that we assume $m_0 + m_1 \geq 3$). Throughout the proof we use freely Corollary~\ref{cor irr} and Proposition~\ref{prop c formulas} without explicitly referring to them.

\begin{step} \label{step1}
There exists $0 < n < m_0$ such that we may assume that, for each $0 \leq p \leq \min \{ n, m_1 - 1 \}$,
\[
\rho = \sum_{i = 0}^p \varepsilon_0^{n - i} \alpha_1 \varepsilon_1^i + \omega \varepsilon_1^{p + 1},
\]
for some $\omega$. Furthermore, if $n > 1$, then $m_0 = n + 1$.

In particular, if $m_1 \leq n + 1$, then we get
\[
\rho = \sum_{i = 0}^{m_1 - 1} \varepsilon_0^{n - i} \alpha_1 \varepsilon_1^i.
\]
On the other hand, if $m_1 > n + 1$, then
\[
\rho = \sum_{i = 0}^n \varepsilon_0^{n - i} \alpha_1 \varepsilon_1^i + \omega \varepsilon_1^{n + 1}.
\]
\end{step}

\begin{proof}[Proof of Step~\ref{step1}]
The proof will be inductive with respect to $p$.

\underline{\textit{The base case}}

Write
\[
\rho = \sum_{j = 1}^h \sum_{\substack{0 \leq a < m_0 \\ 0 \leq b < m_1 \\ a + b \geq 1}} \lambda^j_{a, b} \vep_0^a \alpha_j \vep_1^b.
\]
Let
\[
n := \min \{ a \mid \text{$\lambda_{a, 0}^j \neq 0$ for some $j$} \}.
\]
Note that $n$ is well-defined (by Lemma~\ref{lem:a0}) and $0 < n < m_0$. We may assume $\lambda_{n, 0}^1 \neq 0$ and make the substitution
\[
\alpha_1 \leftarrow \sum_{a, j} \lambda_{a, 0}^j \varepsilon_0^{a - n} \alpha_j.
\]
As a result we get $\rho$ of the form
\[
\varepsilon_0^n \alpha_1 + \omega \varepsilon_1,
\]
for some $\omega$, which shows the claim for $p = 0$. Note that this finishes the proof of Step~\ref{step1} if $m_1 = 1$.

\underline{\textit{The inductive step}}

Let $0 < p \leq \min \{ n, m_1 - 1 \}$ and assume
\[
\rho = \sum_{i = 0}^{p - 1} \varepsilon_0^{n - i} \alpha_1 \varepsilon_1^i + \omega \varepsilon_1^p.
\]
Write
\[
\omega = \sum_{a, j} \lambda_a^j \varepsilon_0^a \alpha_j + \omega' \varepsilon_1,
\]
and put
\[
\omega_q := \sum_j \lambda_q^j \alpha_j \qquad (q = 0, \ldots, n - 1)
\]
and
\[
\omega_n := \sum_{a \geq n, j} \lambda_a^j \varepsilon_0^{a - n} \alpha_j.
\]
In particular,
\[
\rho = \sum_{i = 0}^{p - 1} \varepsilon_0^{n - i} \alpha_1 \varepsilon_1^i + \sum_{q = 0}^n \varepsilon_0^q \omega_q \varepsilon_1^p + \omega' \varepsilon_1^{p + 1}.
\]

$1^\circ$ ($q = n$). We show we may assume $\omega_n = 0$.

We substitute
\[
\alpha_1 \leftarrow \alpha_1 + \omega_n \varepsilon_1^p,
\]
and get
\[
\rho = \sum_{i = 0}^{p - 1} \varepsilon_0^{n - i} \alpha_1 \varepsilon_1^i + \sum_{q = 0}^{n - 1} \varepsilon_0^q \omega_j \varepsilon_1^p + \omega'' \varepsilon_1^{p + 1},
\]
for some $\omega''$.

$2^\circ$ ($0 \leq q \leq n - p - 1$): We show that
\[
\omega_0 = \cdots = \omega_{n - p - 1} = 0
\]
(there is nothing to prove if $p = n$). If this is not the case, let
\[
l := \min \{ q \in \{ 0, \ldots, n - p - 1 \} \mid \omega_q \neq 0 \}.
\]
Take the dimension vector $(l + 2, p + 1)$. Observe that $l + 2 \leq n - p + 1$. Consequently, the variety is described by the relation
\[
\varepsilon_0^l \omega_l \varepsilon_0^p + \varepsilon_0^{l + 1} \omega_{l + 1} \varepsilon_0^p.
\]
The maximal pair of partitions is $((l + 2), (p + 1))$ and $c_{(l + 2), (p + 1)} = 2$. On the other hand, we get no conditions for the stratum $\cS_{(l + 2), (p, 1)}$, hence $c_{(l + 2), (p, 1)} = 0$, which contradicts irreducibility.

Thus we get
\[
\rho = \sum_{i = 0}^{p - 1} \varepsilon_0^{n - i} \alpha_1 \varepsilon_1^i + \sum_{q = n - p}^{n - 1} \varepsilon_0^q \omega_q \varepsilon_1^p + \omega'' \varepsilon_1^{p + 1}.
\]

$3^\circ$ ($q = n - p$): We first show that $\omega_{n - p} = \lambda_{n - p}^1 \alpha_1$ and $\lambda_{n - p}^1 \neq 0$.

Assume first $\omega_{n - p} = 0$ and consider the dimension vector $(n - p + 2, p + 1)$. Then the variety is described by the relation
\[
\varepsilon_0^{n - p + 1} \alpha_1 \varepsilon_1^{p - 1} + \varepsilon_0^{n - p + 1} \omega_{n - p + 1} \varepsilon_1^p.
\]
The maximal pair of partitions is $((n - p + 2), (p + 1))$ and $c_{(n - p + 2), (p + 1)} = 2$. On the other there are no conditions for the stratum $\cS_{(n - p + 1, 1), (p + 1)}$, hence $c_{(n - p + 1, 1), (p + 1)} = 0$, which contradicts irreducibility.

Next assume there exists $j \neq 1$ such that $\lambda_{n - p}^j \neq 0$ and again consider the dimension vector $(n - p + 2, p + 1)$. In this case the variety is described by the relation
\[
\varepsilon_0^{n - p + 1} \alpha_1 \varepsilon_1^{p - 1} + \varepsilon_0^{n - p} \omega_{n - p} \varepsilon_1^p + \varepsilon_0^{n - p + 1} \omega_{n - p + 1} \varepsilon_1^p.
\]
The maximal pair of partitions is $((n - p + 2), (p + 1))$ and $c_{(n - p + 2), (p + 1)} = 3$ (note that $\alpha_1$ and $\omega_{n - p}$ are linearly independent). Let $(\bp, \bq) := ((n - p + 1, 1), (p + 1))$, if $n > p$, and $(\bp, \bq) := ((n - p + 2), (p, 1))$, if $n = p > 1$. Then $\cS_{\bp, \bq}$ is described by the condition $\varepsilon_0^{n - p} \omega_{n - p} \varepsilon_1^p$ in the former case and by the condition $\varepsilon_0^{n - p + 1} \alpha_1 \varepsilon_1^{p - 1}$ in the latter case. In both cases we get $c_{\bp, \bq} = 1$, which contradicts irreducibility.

Thus we only need to deal with the case $n = p = 1$. In this case we consider the dimension vector $(2 m_0, 2)$. Then the variety is described by the relation
$\varepsilon_0 \alpha + \omega_0 \varepsilon_1$. The maximal pair of partitions is $((m_0, m_0), (2))$ and $c_{(m_0, m_0), (2)} = 2 (2 m_0 - 1) = 4 m_0 - 2$. On the other hand $c_{((m_0, m_0), (1, 1))} = 4 (m_0 - 1) = 4 m_0 - 4$, since $\cS_{(m_0, m_0), (1, 1)}$ is described by the condition $\varepsilon_0 \alpha$, which contradicts irreducibility.

Now we show we may assume $\lambda_{n - p}^1 = 1$. If $p = 1$, then we substitute
\[
\varepsilon_1 \leftarrow \lambda_{n - 1}^1 \varepsilon_1.
\]

\underline{\textit{Intermezzo}}

Observe we have already proved our inductive claim for $p = 1$. In particular, this finishes the proof of Step~\ref{step1} if $n = 1$. We additionally show that if $n > 1$, then $m_0 = n + 1$, which we use in the rest of the proof of Step~\ref{step1}. Indeed, if this is not the case, then consider the dimension vector $(n + 2, 2)$. The variety is described by the relation $\varepsilon_0^n \alpha_1 + \varepsilon_0^{n - 1} \alpha_1 \varepsilon_1$. The maximal pair of partitions is $((n + 2), (2))$ and $c_{((n + 2), (2))} = 4$. Moreover, $c_{(n + 1, 1), (2)} = 2$, which contradicts irreducibility.

\underline{\textit{Inductive step (continuation of $3^\circ$)}}

Assume $p > 1$ and $\lambda_{n - p}^1 \neq 1$. If either $p < n$ or $p > 2$, then we consider the dimension vector $(n - p + 3, p + 1)$. The variety is described by the relation
\begin{multline*}
\varepsilon_0^{n - p + 2} \alpha_1 \varepsilon_1^{p - 2} + \varepsilon_0^{n - p + 1} \alpha_1 \varepsilon_1^{p - 1} + \lambda_{n - p}^1 \varepsilon_0^{n - p} \alpha_1 \varepsilon_1^p
\\
+ \varepsilon_0^{n - p + 1} \omega_{n - p + 1} \varepsilon_1^p + \varepsilon_0^{n - p + 2} \omega_{n - p + 2} \varepsilon_1^p
\end{multline*}
(note that $1^\circ$ implies $\omega_{n - p + 2} = 0$ if $p = 2$). The maximal pair of partitions is $((n - p + 3), (p + 1))$ and $c_{(n - p + 3), (p + 1)} = 4$. Let $(\bp, \bq) := ((n - p + 2, 1), (p + 1))$, if $p < n$, and $(\bp, \bq) := ((n - p + 3), (p, 1))$, if $n = p > 2$. Then $c_{\bp, \bq} = 2$, which contradicts irreducibility.

Finally, assume $p = n = 2$ and $\lambda_{n - p}^1 \neq 1$. We consider the dimension vector $(2 m_0, p + 1) = (6, 3)$ (recall we proved $m_0 = n + 1$, provided $n > 1$). The variety is described by the relation
\[
\varepsilon_0^2 \alpha_1 + \varepsilon_0 \alpha_1 \varepsilon_1 + \lambda_0^1 \alpha_1 \varepsilon_1^2 + \varepsilon_0 \omega_1 \varepsilon_1^2
\]
and the maximal pair of partitions is $((3, 3), (3))$. Then $c_{(3, 3), (3)} = 2 \cdot 4 = 8$ and $c_{(3, 3), (2, 1)} = 2 \cdot (2 + 1) = 6$, hence we again get a contradiction.

Thus we get
\[
\rho = \sum_{i = 0}^p \varepsilon_0^{n - i} \alpha_1 \varepsilon_1^i + \sum_{q = n - p + 1}^{n - 1} \varepsilon_0^q \omega_q \varepsilon_1^p + \omega'' \varepsilon_1^{p + 1}.
\]

$4^\circ$ ($n - p + 1 \leq q \leq n - 2$): We prove that
\[
\omega_{n - p + 1} = \cdots = \omega_{n - 2} = 0
\]
(obviously, there is nothing to prove if $p \leq 2$). Assume this is not the case and put
\[
l := \min \{ q \in \{ n - p + 1, \ldots, n - 2 \} \mid \omega_q \neq 0 \}.
\]
If there exists $j \neq 1$ such that $\lambda_l^j \neq 0$, then we consider the dimension vector $(l + 2, p + 1)$. The variety is described by the relation
\[
\sum_{i = n - l - 1}^p \varepsilon_0^{n - i} \alpha_1 \varepsilon_1^i + \varepsilon_0^l \omega_l \varepsilon_1^p + \varepsilon_0^{l + 1} \omega_{l + 1} \varepsilon_1^p
\]
and the maximal pair of partitions is $((l + 2), (p + 1))$. Note that $c_{(l + 2), (p + 1)} = p + l + 3 - n$, while $c_{(l + 2), (p, 1)} = p + l + 1 - n$, which contradicts geometric irreducibility of $A$. Thus $\omega_l = \lambda_l^1 \alpha_1$.

If $\lambda_l^1 \neq 0$ and $l < n - 2$, then we consider the dimension vector $(l + 3, p + 1)$. The variety is described by the relation
\[
\sum_{i = n - l - 2}^p \varepsilon_0^{n - i} \alpha_1 \varepsilon_1^i + \lambda_l^1 \varepsilon_0^l \alpha_1 \varepsilon_1^p + \varepsilon_0^{l + 1} \omega_{l + 1} \varepsilon_1^p + \varepsilon_0^{l + 2} \omega_{l + 2} \varepsilon_1^p
\]
and the maximal pair of partitions is $((l + 3), (p + 1))$. We have $c_{(l + 3), (p + 1)} = p + l + 4 - n$ and $c_{(l + 3), (p, 1)} = p + l + 2 - n$, which contradicts irreducibility.

Finally, if $\lambda_l^1 \neq 0$ and $l = n - 2$, then we consider the dimension vector $(2 m_0, p + 1) = (2 n + 2, p + 1)$. In this case the variety is described by the relation
\[
\sum_{i = 0}^p \varepsilon_0^{n - i} \alpha_1 \varepsilon_1^i + \lambda_l^1 \varepsilon_0^{n - 2} \alpha_1 \varepsilon_1^p + \varepsilon_0^{n - 1} \omega_{n - 1} \varepsilon_1^p
\]
and the maximal pair of partitions is $((n + 1, n + 1), (p + 1))$. We have $c_{(n + 1, n + 1), (p + 1)} = 2 (p + 2) = 2 p + 4$ and $c_{(n + 1, n + 1), (p, 1)} = 2 (p + 1) = 2 p + 2$, hence the claim follows.

Thus if $p \geq 2$, we get
\[
\rho = \sum_{i = 0}^p \varepsilon_0^{n - i} \alpha_1 \varepsilon_1^i + \varepsilon_0^{n - 1} \omega_{n - 1} \varepsilon_1^p + \omega'' \varepsilon_1^{p + 1}.
\]

$5^\circ$ ($q = n - 1$): If $p \geq 2$, we may assume $\omega_{n - 1} = 0$.

First assume there exists $j \neq 1$ such that $\lambda_{n - 1}^j \neq 0$ and consider the dimension vector $(2 m_0, p + 1) = (2 n + 2, p + 1)$. The variety is described be the relation
\[
\sum_{i = 0}^p \varepsilon_0^{n - i} \alpha_1 \varepsilon_1^i + \varepsilon_0^{n - 1} \omega_{n - 1} \varepsilon_1^p
\]
and the maximal pair of partitions is $((n + 1, n + 1), (p + 1))$. We get $c_{(n + 1, n + 1), (p + 1)} = 2 (p + 2) = 2 p + 4$ and $c_{(n + 1, n + 1), (p, 1)} = 2 (p + 1) = 2 p + 2$. Thus irreducibility implies $\omega_{n - 1} = \lambda_{n - 1}^1 \alpha_1$ and  we obtain the claim by substituting
\[
\varepsilon_1 \leftarrow \varepsilon_1 + \lambda_{n - 1}^1 \varepsilon_1^p.
\]
(we use here that $p \geq 2$).
\end{proof}

\begin{step} \label{step2}
If $n > 1$, then $m_1 = n + 1$.
\end{step}

Note that Step~\ref{step2} finishes the proof if $n > 1$.

\begin{proof}[Proof of Step~\ref{step2}]
We know $m_1 \leq m_0 = n + 1$. If $m_1 < m_0$, then consider the dimension vector $(m_0 - m_1 + 1, 2 m_1)$. Then the variety is described by the relation
\[
\varepsilon_0^{m_0 - m_1} \alpha_1 \varepsilon_1^{m_1 - 1}
\]
and the maximal pair of partitions is $((m_0 - m_1 + 1), (m_1, m_1))$. Since $c_{(m_0 - m_1 + 1), (m_1, m_1)} = 2 \cdot 1 = 2$ and $c_{(m_0 - m_1, 1), (m_1, m_1)} = 0$, we get a contradiction to irreducibility.
\end{proof}

\begin{step} \label{step3}
If $n = 1$, then we may assume that, for each $2 \leq p \leq m_1$,
\[
\rho = \varepsilon_0 \alpha_1 + \alpha_1 \varepsilon_1 + \omega \varepsilon_1^p,
\]
for some $\omega$. In particular, for $p = m_1$, we get
\[
\rho = \varepsilon_0 \alpha_1 + \alpha_1 \varepsilon_1.
\]
\end{step}

\begin{proof}[Proof of Step~\ref{step3}]
The proof will be inductive with respect to $p$.

\underline{\textit{The base case}}

If $p = 2$, then the claim follows from Step~\ref{step1}.

\underline{\textit{The inductive step}}

Assume $3 \leq p \leq m_1$ and
\[
\rho = \varepsilon_0 \alpha_1 + \alpha_1 \varepsilon_1 + \omega \varepsilon_1^{p - 1},
\]
for some $\omega$. Write
\[
\omega = \sum_{a, j} \lambda_a^j \varepsilon_0^a \alpha_j \varepsilon_1 + \omega' \varepsilon_1,
\]
and put
\[
\omega_0 := \sum_j \lambda_0^j \alpha_j \qquad \text{and} \qquad \omega_1 := \sum_{a \geq 1, j} \lambda_a^j \varepsilon_0^{a - 1} \alpha_j,
\]
thus
\[
\rho = \vep_0 \alpha_1 + \alpha_1 \vep_1 + \omega_0 \vep_1^{p - 1} + \vep_0 \omega_1 \vep_1^{p - 1} + \omega' \vep_1^p.
\]

$1^\circ$ ($q = 1$). If we substitute
\[
\alpha_1 \leftarrow \alpha_1 + \omega_1 \varepsilon_1^{p - 1},
\]
then we get
\[
\rho = \varepsilon_0 \alpha_1 + \alpha_1 \varepsilon_1 + \omega_0 \varepsilon_1^{p - 1} + \omega'' \varepsilon_1^p,
\]
for some $\omega''$.

$2^\circ$ ($q = 0$). Assume first there exists $j \neq 1$ such that $\lambda_a^j \neq 0$. If we consider the dimension vector $(2 m_0, p)$, then the variety is described by the relation
\[
\varepsilon_0 \alpha_1 + \alpha_1 \varepsilon_1 + \omega_0 \varepsilon_1^{p - 1}
\]
and the maximal pair of partitions is $((m_0, m_0), (p))$. We have $c_{(m_0, m_0), (p)} = 2 (p (m_0 - 1) + 1) = 2 p m_0 - 2 p + 2$ (note that $p \leq m_1 \leq m_0$), while $c_{(m_0, m_0), (p - 1, 1)} = 2 [(p - 1) (m_0 - 1) + (m_0 - 1)] = 2 p m_0 - 2 p$, which contradicts irreducibility.

Thus $\omega_0 = \lambda_0^1 \alpha_1$. Now we substitute
\[
\varepsilon_1 \leftarrow \varepsilon_1 + \lambda_0^1 \varepsilon_1^{p - 1}
\]
and we get
\[
\rho = \varepsilon_0 \alpha_1 + \alpha_1 \varepsilon_1 + \omega''' \varepsilon_1^p,
\]
for some $\omega'''$.
\end{proof}

\begin{step} \label{step4}
We have $m_0 = m_1$.
\end{step}

\begin{proof}
We know the claim holds if $n > 1$, hence assume $n = 1$. In particular,
\[
\rho = \varepsilon_0 \alpha_1 + \alpha_1 \varepsilon_1.
\]
If $m_1 < m_0$, then consider the dimension vector $(m_0, 2 m_1)$. Then the maximal pair of partitions is $((m_0), (m_1, m_1))$ and $c_{(m_0), (m_1, m_1)} = 2 m_1 (m_0 - 1) = 2 m_0 m_1 - 2 m_1$. On the other hand $c_{(m_0 - 1, 1), (m_1, m_1)} = 2 m_1 (m_0 - 2) + 2 (m_1 - 1) = 2 m_0 m_1 - 2 m_1 - 2$. This contradiction finishes the proof of Step~\ref{step4} and Proposition~\ref{prop onerel}.
\end{proof}

\subsection{Proof of Theorem~\ref{thm:intro3}}
We finish now Section~\ref{sect proof one} explaining how the above imply Theorem~\ref{thm:intro3}. Let $A$ be a geometrically irreducible algebra with exactly two simple modules. Corollary~\ref{coro quiver} implies $A$ is Morita equivalent to $A' (h,m_0, m_1) / \cJ$, for some $m_0, m_1 \geq 1$, $h \geq 0$, and an ideal $\cJ$. We may also assume $m_0$, $m_1$ and $h$ are minimal. If $\cJ = 0$, we are done, thus assume $\cJ \neq 0$. In this case $m_0 + m_1 \geq 3$ and $h \geq 1$, thus we are in setup of Section~\ref{sect proof one}, hence Proposition~\ref{prop onerel} implies the claim. \qed


\section{Proof of Theorem~\ref{thm:conj}} \label{sect proof two}


\subsection{Degree of a relation}
Let $Q$ be a quiver. For a path $\sigma = \alpha_1 \cdots \alpha_l$ we define its degree $\deg \sigma$ to be the number of indices $i$ such that $\alpha_i$ is not a loop. If $\rho = \sum_{i = 1}^k \lambda_i \sigma_i$ with $\lambda_1, \ldots, \lambda_k \neq 0$ and $\sigma_1$, \ldots, $\sigma_k$ pairwise different, then the degree $\deg \rho$ of $\rho$ is the minimum of $\deg \sigma_i$, $i = 1, \ldots, k$. Recall that by a shortcut we mean an arrow $\alpha$ such that there exists a path $\sigma$ of degree at least $2$ such that $s (\sigma) = s (\alpha)$ and $t (\sigma) = t (\alpha)$.

\begin{Lem} \label{lem:degree}
Let $A = K Q / \cI$ be a geometrically irreducible bound quiver algebra. If there are no shortcuts in $Q$, then $\cI$ is generated by relations of degree at most $1$.
\end{Lem}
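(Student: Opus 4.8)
The plan is to show that an essential relation of degree at least $2$ cannot occur in a geometrically irreducible algebra without shortcuts, the contradiction being produced by a reducible representation scheme. Throughout I grade $K Q$ by \emph{degree}: since every oriented cycle is a power of a loop (Proposition~\ref{prop known}(1)), the number of non-loop arrows in a path is additive under composition, so $\deg$ is a genuine $\bbN$-grading of $K Q$. Fix a minimal generating set of $\cI$. A relation $\rho$ with $s(\rho) = t(\rho)$ has all its monomials oriented cycles, hence powers of the loop at that vertex (using also Proposition~\ref{prop known}(2)), so $\deg \rho = 0$. If $s(\rho) = i \neq j = t(\rho)$ and there is an arrow $i \to j$, then the absence of shortcuts forbids any path of degree $\geq 2$ from $i$ to $j$, so every monomial of $\rho$ has degree exactly $1$ and $\deg \rho = 1$. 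Thus the only obstruction to generation in degree $\leq 1$ is an essential generator $\rho$ with $i \neq j$, no arrow $i \to j$, and $\deg \rho \geq 2$ (note that, with no arrow $i \to j$, degree $\geq 2$ is automatic). I assume such a $\rho$ exists and seek a contradiction.

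First I shrink the support. For a subset $V \subseteq Q_0$ the representations of $A / A(1 - e_V) A$ with $e_V = \sum_{v \in V} e_v$ coincide, as a scheme, with the representations of $A$ supported on $V$; hence every $\rep(A/A(1-e_V)A, \bd)$ is a representation scheme of $A$ and is irreducible. So $A/A(1-e_V)A$ is again geometrically irreducible, and it has no shortcuts since full subquivers inherit this property. Taking $V$ to be the set of vertices traversed by the monomials of $\rho$, I may assume that every vertex of $Q$ lies on some monomial of $\rho$; in particular, after deleting loops $Q$ is acyclic and joins $i$ to $j$ through at least one intermediate vertex.

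The contradiction will come from the stratification of Proposition~\ref{prop strat}, used in its evident generalization to an arbitrary vertex set: stratify $\rep(A, \bd)$ by the Jordan types $\bp = (\bp_v)_v$ of the loops $M(\vep_v)$, obtaining irreducible locally closed strata $\cS_\bp$ of dimension $\sum_v \dim \cO_{\bp_v} + \dim \bbA_\bp$, the maximal-partition strata giving irreducible components. As in Corollary~\ref{cor irr}, it then suffices to exhibit a dimension vector $\bd$ and a maximal stratum whose codimension datum exceeds that of a sub-maximal neighbour (obtained by replacing one block $(p)$ by $(p-1,1)$, cf.\ Lemma~\ref{Lem orbits}) by at least $2$. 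The guiding case is a bare composite: if $\gamma\beta$ is a loop-free degree-$2$ monomial of $\rho$ and one sets $\bd = \mathbf{1}$ on the traversed vertices, then all loops act as $0$ and the surviving equation $M(\gamma) M(\beta) = 0$ already cuts out the reducible scheme $\{xy = 0\}$. In general I would imitate the analysis of Proposition~\ref{prop onerel}, running along an intermediate vertex of $\rho$ and using a multi-vertex analogue of the rank formulas of Proposition~\ref{prop c formulas} to show that lowering a single Jordan block drops the codimension datum by at least $2$ while $\dim \cO$ drops by exactly $2$, forcing $\dim \cS_{\bp_1} \geq \dim \cS_{\max}$.

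The main obstacle is precisely this last computation. A relation of degree $\geq 2$ need not possess any loop-free monomial (every monomial may carry loop factors, as already happens for the degree-$1$ relations $\rho^{(n)}$), so the clean $\mathbf{1}$-dimensional argument is unavailable, and one must instead track how the single equation $\rho(J_\bp, \ldots) = 0$ distributes across the arrow slots along the intermediate vertices. Establishing the required multi-vertex codimension estimates, namely the generalization of Proposition~\ref{prop c formulas} and the choice of a dimension vector isolating the offending monomial, is where the real work lies; I expect it to proceed by the same pattern that occupies the Steps of Proposition~\ref{prop onerel}, normalizing $\rho$ by arrow and loop substitutions and then testing small dimension vectors against Corollary~\ref{cor irr}.
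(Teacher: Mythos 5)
Your reduction steps are sound and match the first half of the paper's argument: the degree grading is well defined because oriented cycles are powers of loops, relations between a pair of vertices joined by an arrow automatically have degree $1$ in the absence of shortcuts, and one may pass to the quotient $A/AeA$ supported on the vertices traversed by the offending relation, which is again geometrically irreducible and shortcut-free. (The paper additionally splits $\rho$ into pieces $\rho_I$ according to the common support of monomials before quotienting, which guarantees the resulting quiver is a linear chain of loops; with your choice of $V$ as the union of all supports this is not automatic.) The problem is what comes after. You correctly identify that the entire content of the lemma now rests on showing that the quotient algebra, living on a multi-vertex loop-chain quiver, admits no essential relation of degree $\geq 2$, and you then explicitly concede that the required multi-vertex codimension estimates --- the analogue of Proposition~\ref{prop c formulas} and the choice of test dimension vectors --- are not carried out. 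This is not a routine verification to be deferred: it is precisely the statement of Proposition~5.4 of the prequel \cite{BobinskiSchroer2017a}, which the paper invokes at exactly this point instead of reproving it. Your heuristic with $\bd = \mathbf{1}$ and $\{xy=0\}$ does not survive even mild perturbation (a relation with two loop-free degree-$2$ monomials gives an irreducible quadric such as $xy + zw = 0$, and, as you note, monomials may all carry loop factors), so no part of the general case is actually established.

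A second, smaller omission: once one knows that the ideal $\cI'$ of the quotient is generated by relations of degree at most $1$, one still has to lift these generators back to relations in $\cI$ (the paper does this by observing that a correction term $\omega_1 e \omega_2$ would have degree $\geq 2$ and hence cannot exist without a shortcut or a non-loop cycle). Your proposal never reaches this step. In summary, the skeleton is right and parallel to the paper, but the load-bearing part of the proof --- reducibility of $\rep(A,\bd)$ in the presence of a degree-$\geq 2$ essential relation, supplied in the paper by an external citation --- is missing.
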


\begin{proof}
Let $R_0$ and $R_1$ be the sets of relations of degree $0$ and $1$ in $\cI$, respectively. Our aim is to show that $\cI \subseteq \cJ :=(R_0 \cup R_1)$. Fix a relation $\rho = \sum_{i = 1}^k \lambda_i \sigma_i$ in $\cI$ and assume $\deg \rho \geq 2$. For a path $\sigma = \alpha_1 \cdots \alpha_n$ let $\supp (\sigma) := \{ t (\sigma), s (\alpha_1), \ldots, s (\alpha_n) \} \subseteq Q_0$. It is sufficient to show that $\rho_I \in \cJ$, for each $I \subseteq Q_0$, where
\[
\rho_I := \sum_{i : \supp (\sigma_i) = I} \lambda_i \sigma_i.
\]
In other words, we may assume $\supp (\sigma_1) = \cdots = \supp (\sigma_k) =: I$.

Let $B := A / A e A$, where $e := \sum_{i \not \in I} e_i$. Since there are nor shortcuts neither oriented cycles of positive degree in $Q$ (see Proposition~\ref{prop known} for the latter claim), $B = K Q' / \cI'$, where $Q'$ is a subquiver of the quiver
\[
\xymatrix{
0 \ar@(ul,ur)^{\vep_0}
&
1 \ar@(ul,ur)^{\vep_1} \ar@<-1ex>[l]_{\alpha_{11}} \ar@<1ex>^{\alpha_{h_11}}_{\cdots}[l]
&
2 \ar@(ul,ur)^{\vep_2} \ar@<-1ex>[l]_{\alpha_{12}} \ar@<1ex>^{\alpha_{h_22}}_{\cdots}[l]
&
\cdots \ar@<-1ex>[l]_{\alpha_{13}} \ar@<1ex>^{\alpha_{t3}}_{\cdots}[l]
&
n \ar@(ul,ur)^{\vep_n} \ar@<-1ex>[l]_{\alpha_{1n}} \ar@<1ex>^{\alpha_{h_nn}}_{\cdots}[l]
}.
\]
Obviously $\rho \in \cI'$. Since $B$ is geometrically irreducible, we know from \cite{BobinskiSchroer2017a}*{Proposition~5.4} that $\cI'$ is generated by relations at most $1$. If $\rho' \in \cI'$, then there exist $\omega_1, \omega_2 \in K Q$ such that $\rho' + \omega_1 e \omega_2 \in \cI$. If additionally we assume $\deg \rho' \leq 1$, then $\omega_1 e \omega_2 = 0$, hence $\rho' \in \cI$. Indeed, if this is not the case, then $\deg (\omega_1 e \omega_2) \geq 2$, which is a contradiction, since there are neither shortcuts nor oriented cycles of positive degree in $Q$. The above implies $\cI' \subseteq (R_0 \cup R_1) = \cJ$. In particular, $\rho \in \cJ$, which finishes the proof.
\end{proof}

\subsection{No overlapping relations}
Roughly speaking, the following lemma says that relations of degree $1$ cannot overlap.

\begin{Lem} \label{lem:nooverlap}
Let $A = K Q / \cI$. Assume $Q$ has three vertices and there are no shortcuts in $Q$. If $A$ is geometrically irreducible, then there exist relations $\rho_1$, \ldots, $\rho_n$ of degree $0$ and a relation $\rho$ of degree $1$ such that
\[
\cI = (\rho_1, \ldots, \rho_n, \rho).
\]
\end{Lem}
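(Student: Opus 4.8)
The plan is to first pin down the shape of $Q$, then reduce everything to the two-vertex classification on subquotients, and finally rule out the possibility of two competing degree-$1$ relations by a dimension count.

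First I would determine the quiver. By Proposition~\ref{prop known} every oriented cycle of $Q$ is a power of a loop and each vertex carries at most one loop, so the non-loop arrows form an acyclic quiver on the three vertices; since there are no shortcuts, up to relabelling this acyclic part is one of three shapes: the linear quiver $2 \to 1 \to 0$, the source $0 \leftarrow 1 \to 2$, or the sink $1 \to 0 \leftarrow 2$ (no arrow may join the two extreme vertices, as it would be a shortcut). By Lemma~\ref{lem:degree} the ideal $\cI$ is generated by relations of degree at most $1$. A degree-$0$ relation is supported at a single vertex and is a polynomial in that loop; I collect all of these (including the admissibility truncations $\vep_i^{m_i}$) as $\rho_1, \ldots, \rho_n$. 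For a degree-$1$ relation, note that a monomial of degree $\geq 2$ sharing source and target with a degree-$1$ monomial would exhibit its single non-loop arrow as a shortcut; hence every degree-$1$ relation is homogeneous of degree $1$ and all its monomials use non-loop arrows between one fixed ordered pair of vertices. It thus remains to control, for each such pair, the degree-$1$ relations between its two vertices.

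Next I would analyse each pair $(v, t)$ separately. Killing the third vertex gives the quotient $B := A / A e A$ with $e$ the sum of the remaining idempotents; $B$ is again geometrically irreducible (as used in the proof of Lemma~\ref{lem:degree}) and has exactly two simples, so Theorem~\ref{thm:intro3} applies. The degree-$1$ relations of $A$ on the pair $(v, t)$ involve neither the third vertex nor paths through it, so they map faithfully to relations of $B$; since $B$ is of the form $A (h, m, m, 1)$, $A (h, m, m, m - 1)$ or $A' (h, m_0, m_1)$, there is at most one degree-$1$ relation on the pair, and if present it may be taken to be $\rho^{(1)} = \vep_0 \alpha + \alpha \vep_1$ or $\rho^{(m-1)} = \sum_{i = 0}^{m - 1} \vep_0^{m - 1 - i} \alpha \vep_1^i$ (in the appropriate labelling of the pair), with equal truncation orders at the two vertices. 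In particular, whenever a pair carries a relation, both of its vertices have a loop of the same order $m \geq 2$.

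The heart of the argument is to show that at most one pair can carry a degree-$1$ relation. Suppose two pairs do; in each of the three shapes these pairs share a common vertex $v$, which then carries a loop of order $m_v \geq 2$. I would test the dimension vector $\bd$ with $d_v = m_v$ and the other two coordinates equal to $1$. At the two non-shared vertices the loops act as zero, so each of the two relations collapses to $\vep_v^l \gamma$ or $\gamma \vep_v^l$ for a single arrow $\gamma$ and some $l \in \{ 1, m_v - 1 \}$. Crucially, the two relations involve \emph{disjoint} sets of arrows, and in the linear shape one checks that every degree-$2$ element of $\cI$ (a product running through $v$) is automatically annihilated once the two degree-$1$ conditions hold; hence the relevant codimension splits as the sum of the two separate contributions. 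Perturbing the maximal partition at $v$ from $(p)$ to $(p - 1, 1)$ with $p = m_v$, Proposition~\ref{prop c formulas}~\eqref{item c formula2} and its dual give a codimension drop of exactly $1$ from each relation (here $l \leq p - 1$ is used), hence a total drop of $2$. By the multi-vertex version of the stratification count underlying Proposition~\ref{prop strat} and Corollary~\ref{cor irr} (see \cite{BobinskiSchroer2017a}*{Subsection~5.3}), together with Lemma~\ref{Lem orbits} giving an orbit-dimension drop of $2$ at $v$, the scheme $\rep (A, \bd)$ is reducible, contradicting geometric irreducibility. Therefore a degree-$1$ relation occurs on at most one pair, yielding a single $\rho$, and $\cI = (\rho_1, \ldots, \rho_n, \rho)$.

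I expect the main obstacle to be precisely the linear shape $2 \to 1 \to 0$, where the shared vertex lies in the middle and genuine paths of degree $2$ from $2$ to $0$ exist: there the two relations are chained, and one must verify that the choice $d_0 = d_2 = 1$ forces all degree-$2$ consequences to vanish, so that the codimension again splits additively into two independent drops. The source and sink shapes are easier, since no degree-$2$ paths exist at all.
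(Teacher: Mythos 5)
Your proposal is correct and follows essentially the same route as the paper: reduce each adjacent pair of vertices to the two-vertex classification of Theorem~\ref{thm:intro3}, then rule out two degree-$1$ relations meeting at a common vertex $v$ by choosing a dimension vector equal to $1$ at the two outer vertices (so both relations collapse to monomial conditions in the loop at $v$) and comparing the strata for the Jordan types $(p)$ and $(p-1,1)$ at $v$, which turn out to have equal dimension. The only cosmetic difference is your choice of middle coordinate ($m_v$ rather than the paper's $n_2+1$) and that the paper phrases the final count as a direct computation of $\dim\cU=\dim\cV$ instead of via the codimension-drop criterion of Corollary~\ref{cor irr}; both work, and the paper likewise handles the normalization of the two relations at the shared vertex by allowing a modified loop $\vep_1'$, which does not affect the rank counts.
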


\begin{proof}
Since $A$ is geometrically irreducible and $Q$ contains no shortcuts, Proposition~\ref{prop known} implies that up to duality either $Q$ is a subquiver of
\[
Q' : \xymatrix{
0 \ar@(ul,ur)^{\vep_0} & 1 \ar@(ul,ur)^{\vep_1} \ar@<-1ex>[l]_{\alpha_1} \ar@<1ex>^{\alpha_h}_{\cdots}[l] & 2 \ar@<1ex>[l]^{\beta_l} \ar@<-1ex>_{\beta_1}^{\cdots}[l] \ar@(ul,ur)^{\vep_2}}
\]
or a subquiver of
\[
Q'' : \xymatrix{
0 \ar@(ul,ur)^{\vep_0} & 1 \ar@(ul,ur)^{\vep_1} \ar@<-1ex>[l]_{\alpha_1} \ar@<1ex>^{\alpha_h}_{\cdots}[l]  \ar@<1ex>[r]^{\beta_1} \ar@<-1ex>_{\beta_l}^{\cdots}[r] & 2 \ar@(ul,ur)^{\vep_2}}.
\]
We concentrate on the former case.

First observe, that \cite{BobinskiSchroer2017a}*{Proposition~5.4} (or Lemma~\ref{lem:degree}) implies that $\cI$ is generated by relations of degree at most $1$. Moreover, it follows from Theorem~\ref{thm:intro3} that we may assume there is at most one relation $\rho$ such that $s (\rho) = 1$ and $t (\rho) = 0$, and if there is such a relation, then $\vep_0$ and $\vep_1$ belong to $Q$. We have a similar claim for the vertices $1$ and $2$. Consequently, using Theorem~\ref{thm:intro3} again we may assume $Q = Q'$ and $\cI$ is generated by relations $\vep_0^m$, $\vep_1^m$, $\vep_2^m$, $\sum_{i = 0}^{n_1} \vep_0^i \alpha_1 \vep_1^{n_1 - i}$ and $\sum_{j = 0}^{n_2} (\vep_1')^j \beta_1 \vep_2^{n_2 - i}$, for some $0 < n_1, n_2 < m$, where $\vep_1' = \lambda_1 \vep_1 + \cdots + \lambda_{m - 1} \vep_1^{m - 1}$, for some $\lambda_1, \ldots, \lambda_{m - 1} \in K$, with $\lambda_1 \neq 0$. Finally, by symmetry we may assume $n_1 \leq n_2$.

We explain the above claim more precisely. First, we apply Theorem~\ref{thm:intro3} to the algebra $(e_1 + e_2) A (e_1 + e_2)$. In this way we get relations $\vep_1^m$, $\vep_2^m$ and $\sum_{j = 0}^{n_2} \vep_1^j \beta_1 \vep_2^{n_2 - i}$. Next, we apply Theorem~\ref{thm:intro3} to the algebra $(e_0 + e_1) A (e_0 + e_1)$ and get relations $\vep_0^m$, $\vep_1^m$ (note that $m$ does not change) and $\sum_{j = 0}^{n_1} \vep_1^j \beta_1 \vep_2^{n_2 - i}$. However, in this step we may modify $\vep_1$ and that is why we need to introduce $\vep_1'$.

Now consider the dimension vector $\bd = (1, n_2 + 1, 1)$. Then the variety $\rep (A, \bd)$ is described by the conditions $\alpha_1 \vep_1^{n_1}$ and $\vep_1^{n_2} \beta_1$. Let $\cU$ be the subset of $\rep (A, \bd)$ consisting of $M$ such that $M (\vep_1)$ is of rank (at least) $n_2$ (equivalently, $M (\vep_1)$ is similar to $J_{n_2 + 1}$). Then $\cU$ is an (irreducible) open subset of $\rep (A, \bd)$. Moreover, by arguments analogous to those presented in
Subsection~\ref{sub strat}, one calculates that
\[
\dim \cU = n_2^2 + n_2 + h n_2 + k n_2 - n_1 - 2.
\]
On the other hand, if $\cV$ be the subset of $\rep (A, \bd)$ consisting of $M$ such that $M (\vep_1)$ is similar to $J_{(n_2, 1)}$, then again
\[
\dim \cV = n_2^2 + n_2 + h n_2 + k n_2 - n_1 - 2.
\]
Consequently, $\cV$ cannot be contained in the closure of $\cU$, which contradicts irreducibility and finishes the proof in this case.

The case, when $Q$ is a subquiver of $Q''$ is treated in the same way. The only difference is that we do not need to refer to~\cite{BobinskiSchroer2017a}*{Proposition~5.4} in this case.
\end{proof}

\subsection{Proof of Theorem~\ref{thm:conj}}
We present now the proof of Theorem~\ref{thm:conj}. Let $A = K Q / \cI$ be a minimal geometrically irreducible bound quiver algebra and assume there are no shortcuts in $Q$. By definition minimality of $A$ implies that $Q$ is connected. In particular, if there are no arrows in $Q$, then $Q$ has exactly one vertex.

We know from Lemma~\ref{lem:degree} that $\cI$ is generated by relations of degree at most $1$. Fix sets $R_0$ and $R_1$ of relations of degree $0$ and $1$, respectively, such that $R_0 \cup R_1$ generates $\cI$. We also assume that the pair $(R_0, R_1)$ is minimal in the sense, that $|R_1|$ is minimal possible.

Assume first there exists an arrow $\alpha$ in $Q$, which is not a loop, such that there is no $\rho \in R_1$ with $s (\rho) = s (\alpha)$ and $t (\rho) = t (\alpha)$. Then minimality of $A$ implies that $A$ is the path algebra of the quiver
\[
\xymatrix{
i & j \ar[l]_{\alpha}
},
\]
where $i := t (\alpha)$ and $j := s (\alpha)$, hence in particular $Q$ has two vertices. Indeed, if this is not the case, then we put $Q' := (\{ i, j \}, \{ \alpha \})$, $\cI' := 0$, $Q'' := (Q_0, Q_1 \setminus \{ \alpha \})$, and $\cI'' := \cI \cap K Q''$. Then obviously, $Q_0 = Q_0' \cup Q_1''$, $Q_1 = Q_1' \cup Q_1''$, $Q_1' \cap Q_1'' = \emptyset$, and $Q_1' \neq \emptyset \neq Q_1''$ (the last claim uses connectivity again). Moreover, our assumption on $\alpha$ implies $R_0 \cup R_1 \subseteq \cI''$, hence $\cI''$ generates $\cI$ in $K Q$. Altogether the above contradicts minimality of $A$.

Thus we may assume $Q_1 \neq \emptyset$ and for each arrow $\alpha$ in $Q$ there exists $\rho \in R_1$ with $s (\rho) = s (\alpha)$ and $t (\rho) = t (\alpha)$. Fix $\alpha \in Q_1$ and $\rho_1 \in R_1$ such that $s (\rho_1) = s (\alpha)$ and $t (\rho_1) = t (\alpha)$. If $|Q_0| > 2$, then connectivity of $Q$ implies that there exists $\beta \in Q_1$ such that $\{ s (\beta), t (\beta) \} \cap \{ s (\alpha), t (\alpha) \}$ has exactly one element. Fix $\rho_2 \in R_1$ such that $s (\rho_2) = s (\beta)$ and $t (\rho_2) = t (\beta)$. Put $I := \{ s (\alpha), t (\alpha) \} \cup \{ s (\beta), t (\beta) \}$ (note that $|I| = 3$) and $B := A / A e A$, where $e := \sum_{i \not \in I} e_i$. Then $B = K Q' / \cI'$, where $Q'$ is the full subquiver of $Q$ with the vertex set $I$. Note that $\rho_1, \rho_2 \in \cI'$, as there are no shortcuts in $Q$. Since $B$ is geometrically irreducible, Lemma~\ref{lem:nooverlap} implies there exists relations $\rho_1'$, \ldots, $\rho_n'$ of degree $0$ and a relation $\rho$ of degree $1$ such that $\cI' = (\rho_1', \ldots, \rho_n', \rho)$. Since there are neither shortcuts nor oriented cycles of positive degree in $Q$, $\rho_1', \ldots, \rho_n', \rho \in \cI$ (compare the proof of Lemma~\ref{lem:degree}). Consequently, if we put
\[
R_0' := R_0 \cup \{ \rho_1', \ldots, \rho_n' \} \qquad \text{and} \qquad R_1' := (R_1 \setminus \{ \rho_1, \rho_2 \}) \cup \{ \rho \},
\]
then $\cI$ is generated by $R_0' \cup R_1'$. Since $|R_1'| < |R_1|$, this contradicts minimality of the pair $(R_0, R_1)$, hence $|Q_0| \leq 2$.

The list of possible forms of minimal geometrically irreducible algebras without shortcuts is now an immediate consequence of Theorem~\ref{thm:intro3}. \qed

\bibsection

\begin{biblist}

\bib{Birkhoff}{article}{
   author={Birkhoff, G.},
   title={Subgroups of abelian groups},
   journal={Proc. London Math. Soc. (2)},
   volume={38},
   date={1935},
   pages={385--401},
}

\bib{BobinskiRiedtmannSkowronski}{collection.article}{
   author={Bobi\'nski, G.},
   author={Riedtmann, C.},
   author={Skowro\'nski, A.},
   title={Semi-invariants of quivers and their zero sets},
   book={
      title={Trends in Representation Theory of Algebras and Related Topics},
      series={EMS Ser. Congr. Rep.},
      publisher={Eur. Math. Soc., Z\"urich},
   },
   date={2008},
   pages={49--99},
}

\bib{BobinskiSchroer2017a}{article}{
   author={Bobi\'nski, G.},
   author={Schr\"oer, J.},
   title={Algebras with irreducible module varieties I},
   eprint={arXiv:1709.05841},
}

\bib{BobinskiSchroer2017b}{article}{
   author={Bobi\'nski, G.},
   author={Schr\"oer, J.},
   title={Algebras with irreducible module varieties III: Birkhoff varieties},
   date={2017},
   status={in preparation},
}

\bib{Bongartz1991}{article}{
   author={Bongartz, K.},
   title={A geometric version of the Morita equivalence},
   journal={J. Algebra},
   volume={139},
   date={1991},
   number={1},
   pages={159--171},
}

\bib{Bongartz1998}{collection.article}{
   author={Bongartz, K.},
   title={Some geometric aspects of representation theory},
   book={
      title={Algebras and Modules, I},
      series={CMS Conf. Proc.},
      volume={23},
      publisher={Amer. Math. Soc., Providence, RI},
   },
   date={1998},
   pages={1--27},
}


\bib{Geiss}{collection.article}{
   author={Gei\ss , Ch.},
   title={Geometric methods in representation theory of finite-dimensional algebras},
   book={
      title={Representation theory of algebras and related topics},
      series={CMS Conf. Proc.},
      volume={19},
      publisher={Amer. Math. Soc., Providence, RI},
   },
   date={1996},
   pages={53--63},
}

\bib{Gerstenhaber}{article}{
   author={Gerstenhaber, M.},
   title={On nilalgebras and linear varieties of nilpotent matrices. III},
   journal={Ann. of Math. (2)},
   volume={70},
   date={1959},
   pages={167--205},
}

\bib{KussinLenzingMeltzer}{article}{
   author={Kussin, D.},
   author={Lenzing, H.},
   author={Meltzer, H.},
   title={Nilpotent operators and weighted projective lines},
   journal={J. Reine Angew. Math.},
   volume={685},
   date={2013},
   pages={33--71},
}


\bib{RingelSchmidmeier}{article}{
   author={Ringel, C. M.},
   author={Schmidmeier, M.},
   title={Invariant subspaces of nilpotent linear operators. I},
   journal={J. Reine Angew. Math.},
   volume={614},
   date={2008},
   pages={1--52},
}

\bib{Simson}{article}{
   author={Simson, D.},
   title={Representation types of the category of subprojective representations of a finite poset over $K[t]/(t^m)$ and a solution of a Birkhoff type problem},
   journal={J. Algebra},
   volume={311},
   date={2007},
   number={1},
   pages={1--30},
}

\bib{Zwara}{collection.article}{
   author={Zwara, G.},
   title={Singularities of orbit closures in module varieties},
   book={
      title={Representations of Algebras and Related Topics},
      series={EMS Ser. Congr. Rep.},
      publisher={Eur. Math. Soc., Z\"urich},
   },
   date={2011},
   pages={661--725},
}

\end{biblist}

\end{document}